\numberwithin{equation}{section}
\newtheorem{theorem}{Theorem}[section]
\newtheorem{definition}[theorem]{Definition}
\newtheorem{lemma}[theorem]{Lemma}
\newtheorem{proposition}[theorem]{Proposition}
\newtheorem{remark}[theorem]{Remark}
\begin{document}

\title{Intersection Poincar$\acute{\text{E}}$ Polynomial for Nagaraj-Seshadri moduli space}

\author[P. Barik]{Pabitra Barik}

\address{Department of Mathematics, Indian Institute of Technology-Madras, Chennai, India }

\email{pabitra.barik@gmail.com}

\author[A. Dey]{Arijit Dey}

\address{Department of Mathematics, Indian Institute of Technology-Madras, Chennai, India }

\email{arijitdey@gmail.com}

\author[Suhas, B N]{Suhas, B N}
\address{Department of Mathematics, Indian Institute of Technology-Madras, Chennai, India }
\email{chuchabn@gmail.com}
\subjclass[2010]{14H60,14D20}

\maketitle

\abstract
We compute Betti numbers of both the components of the moduli space of rank $2$ semi-stable torsion-free sheaves with fixed determinant over a reducible nodal curve with two smooth components intersecting at a node. We also compute the intersection Betti numbers of the moduli space.
\endabstract
\section{Introduction}

Let $Y$ be a smooth irreducible projective curve over $\mathbb{C}$ degenerating to a reducible nodal curve $X$ with two smooth components meeting transversally at one point and $L$ be a line bundle of odd degree on $Y$ specializing to a line bundle $\xi$ on $X$. Then the moduli space $M_{L}$ of stable vector bundles of rank two and determinant $L$ specializes to the moduli space $M_{\xi}$ of stable torsion-free sheaves on $X$ of rank two and determinant $\xi$ \cite[Remark 7.1, 7.2]{N-S} . It is known that $M_{L}$ is a smooth irreducible projective variety.  The 'limiting' moduli space $M_{\xi}$, though, is neither smooth nor irreducible but has two smooth components $M_{12}$ and $M_{21}$, intersecting transversally along a smooth divisor $N$ \cite[Proposition 6.5]{S-B}.

The topological invariants of $M_{L}$ have been quite well studied and its Poincar$\acute{\text{e}}$ Polynomial has been computed by many authors using different techniques \cite{A-B}, \cite{U-R}, \cite{H-N}. It is natural to investigate similar topological invariants of $M_{\xi}$. Recently, it has been shown that the irreducible components $M_{12}$ and $M_{21}$ of $M_{\xi}$ are rational varieties (cf. \cite{B-D-S}). In particular, they are unirational and hence by a result of Serre \cite{Serre} they are simply connected. The first three Betti numbers of each component have been computed in \cite[Theorem 3.9]{S-B} by using topological techniques. In this paper, we find all the Betti numbers of both the components (see \S 4). We follow Harder--Narasimhan method \cite{H-N} which is based on Weil conjectures that tell us that for any smooth complex projective variety $V$ defined over a number ring, the number of $\mathbb F_{q^n}$ rational points of the corresponding variety $V$ over $\mathbb F_{q^n}$ determines the Betti numbers of $V$. In order to compute rational points of $M_{12}$ and $M_{21}$, we use their description in terms of triples \cite{N-S} and the stratification obtained in \cite{B-S}. The idea of proof is based on \cite{B-S}, where the authors compute the Betti numbers for Seshadri desingularization of the moduli space of rank $2$ semi-stable vector bundles with trivial determinant over a smooth complex projective curve.

Since the components $M_{12}$ and $M_{21}$ of $M_{\xi}$ intersect transversally along a smooth divisor $N$, the normalization $\widetilde{M}_{\xi}$ is isomorphic to $M_{12} \sqcup M_{21}$. Using this fact and a result of Goresky-Macpherson \cite[\S 4.2]{G-M} we compute the intersection Poincar$\acute{\text{e}}$ polynomial of $M_{\xi}$ (see Theorem \ref{IPP(M)}).

{\bf Acknowledgement.} It is a pleasure to thank V. Balaji for suggesting this problem and for many useful discussions we had while working on it. We also thank C. S. Seshadri and D.S. Nagaraj for some helpful discussions during this work.

\section{Preliminaries}

\subsection{A Brief Description of the Nagaraj-Seshadri Moduli Space}
In this section, we shall briefly recall some of the results proved in \cite{N-S} which will be useful in later sections. Let $X$  be a reducible projective nodal curve defined over an algebraically closed field with two smooth irreducible components $X_1$ and $X_2$ meeting at the nodal point $p$.  Any torsion-free sheaf $E$ on $X$ can be identified with a triple $(E_{1},
E_{2},\overrightarrow{T})$ or $( E_{1}^{\prime}, E_{2}^{\prime} , \overleftarrow{S} )$ where $E_{i}$'s are locally free sheaves over $X_{i}$'s and $\overrightarrow{T}$ and $
\overleftarrow{S}$ are linear maps from $E_{1}(p)$ to $E_{2}(p)$ and $E_{2}^{\prime}(p)$ to $E_{1}^{\prime}(p)$ respectively.  
\begin{definition}\label{definition_of_isomorphism_class_of_triples}
 (i) A morphism between two triples $(E_{1}, E_{2},\overrightarrow{T})$ and $(F_{1}, F_{2},\overrightarrow{U})$ is a pair $(h_1,h_2)$, where $h_i\,: E_i \rightarrow F_i$ is a map of $\mathcal{O}_{X_i}$-modules for each $i$, such that the following diagram commutes -
    $$\xymatrix{
E_{1}(p) \ar@{->}[r]^{T} \ar@{->}[d]^{h_1} & E_{2}(p) \ar@{->}[d]^{h_2}
\\F_{1}(p)\ar@{->}[r]^{U} & F_{2}(p).
}$$

(ii) If $h_i$'s are isomorphic as $\mathcal{O}_{X_i}$-modules, we say $(E_{1}, E_{2},\overrightarrow{T})$ and $(F_{1}, F_{2},\overrightarrow{U})$ are isomorphic as triples.
\end{definition}
We denote the isomorphism class of triple $(E_1,E_2,\overrightarrow{T})$ by $[E_1,E_2,\overrightarrow{T}]$. 
Similar definitions hold for triples in other direction. In fact in \cite[Lemma 2.3]{N-S}, an equivalence
is shown between the category of torsion-free sheaves and the category of triples. 

Let $E$ be a torsion-free sheaf on $X$ identified by the triple $(E_{1}, E_{2},
\overrightarrow{T})$ as well as $( E_{1}^{\prime}, E_{2}^{\prime} , \overleftarrow{S} )$. Then we have the following equality of Euler characteristics between them
(see\cite{N-S},Remark 2.11)-
\begin{equation}
 \chi(E) \,=\, \chi(E_{1}, E_{2},\overrightarrow{T})= \chi( E_{1}^{\prime}, E_{2}^{\prime} , \overleftarrow{S}),
\end{equation}

 where
 \begin{eqnarray}
  \chi(E_{1}, E_{2},\overrightarrow{T}):= \chi(E_{1}) + \chi(E_{2})- rk(E_{2}),
 \end{eqnarray}
  and
 \begin{equation}
  \chi( E_{1}^{\prime}, E_{2}^{\prime} , \overleftarrow{S}):= \chi( E_{1}^{\prime})+ \chi(E_{2}^{\prime}) - rk(E_{1}^{\prime}).
 \end{equation}

 Let ${\bf a}\,=\,(a_{1},a_{2})$ be a polarization on $X$ with $a_i \, > \,0$ rational numbers and $a_{1} + a_{2} \,=\, 1.$ For every non-zero triple $(E_{1},E_{2},\overrightarrow{T}),$ we define
  \begin{equation*}
   \mu((E_{1},E_{2},\overrightarrow{T}))\, = \,\frac{\chi(E_{1},E_{2},\overrightarrow{T})}{a_{1}rk(E_{1})+a_{2}rk(E_{2})}.
  \end{equation*}
  \begin{definition}
  Let $(E_{1},E_{2},\overrightarrow{T})$ be a triple. We say that the triple $(E_{1},E_{2},\overrightarrow{T})$ is stable (resp. semi-stable) if for every proper sub-triple $(G_{1},G_{2},\overrightarrow{U}),$ \\
  $\mu((G_{1},G_{2},\overrightarrow{U})) \,<\, \mu((E_{1},E_{2},\overrightarrow{T}))$ (resp. $\leq$).
  \end{definition}

(Similar definitions can be given for the triple $(E_{1}^{\prime},E_{2}^{\prime},\overleftarrow{S})$).

When $\chi$ is odd and $a_1\chi$ is not an integer the moduli space $M(2,{\bf a},\chi)$ of stable torsion-free sheaves on $X$ with Euler characteristic $\chi$ is a reduced, connected projective variety  with the two smooth irreducible components $M_{12}$ and $M_{21}$ intersecting transversally on a smooth projective variety $N$ \cite[Theorem 4.1]{N-S}. The irreducible components $M_{12}$ and $M_{21}$ have the following description in terms of triples:

 The first component $M_{12}$ is a smooth projective variety which is a fine moduli space consisting of isomorphism classes of stable triples $[E_{1}, E_{2},\overrightarrow{T}]$ such that $E_{i}$'s are rank 2 vector bundles over $X_{i}$'s, and $\overrightarrow{T}:\,E_{1}(p)\rightarrow E_{2}(p)$ is a nonzero linear map such that
 \begin{equation}\label{4}
  a_{1}\chi \,< \,\chi(E_{1}) \,< \,a_{1}\chi +1 ,~a_{2}\chi +1 \,< \,\chi(E_{2}) \,< \,a_{2}\chi +2 ,
 \end{equation}
The second component $M_{21}$ also has a similar description in terms of triples. It is a smooth projective variety which is a fine moduli space consisting of isomorphism classes of stable triples $[E_{1}^{\prime}, E_{2}^{\prime} , \overleftarrow{S}]$ such that $E_{i}^{\prime}$'s are rank 2 vector bundles over $X_{i}$'s, and $\overleftarrow{S}:\, E_{2}^{\prime}(p) \rightarrow E_{1}^{\prime}(p)$ is a nonzero linear map such that
\begin{equation}\label{5}
  a_{1}\chi +1 < \chi(E_{1}^{\prime}) < a_{1}\chi +2 ,~a_{2}\chi < \chi(E_{2}^{\prime}) < a_{2}\chi +1.
 \end{equation}

 Form now on, we fix an odd integer $\chi$ and a polarization ${\bf a}\,=\,(a_1,a_2)$ such that $a_1\chi$ is not an integer.
 \begin{lemma} \label{divisor}
 Let $E_1$ be a rank $2$ semi-stable vector bundle on $X_1$ and $E_2$ be a rank $2$ semi-stable  vector bundle on $X_2$ such that at least one of them is stable and
$\chi(E_1)$, $\chi(E_2)$ satisfy inequalities \eqref{4}. Then for any nonzero linear map $\overrightarrow{T}\,:E_1(p) \rightarrow E_2(p)$, the triple $(E_1,E_2,\overrightarrow{T})$ is stable.
 \begin{proof}
One can easily prove this lemma by a case-by-case analysis.
\end{proof}
\begin{remark}\label{remark_after_divisor_lemma}
 The converse of the above Lemma is also true, i.e., if $(E_1,E_2,\overrightarrow{T})$ is a rank two, semi-stable triple, such that $\chi(E_1)$ and $\chi(E_2)$ satisfy inequalities \eqref{4}, then $E_1$ and $E_2$ are semi-stable vector bundles on $X_1$ and $X_2$ respectively \cite[Theorem 5.1]{N-S}. Similar results hold for triples in the other direction also.

\end{remark}
\end{lemma}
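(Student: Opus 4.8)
The plan is to verify stability directly from the definition, reducing it to a finite case analysis indexed by the ranks of a sub-triple. First I would record the slope of the full object: since $rk(E_1)=rk(E_2)=2$ and $a_1+a_2=1$, the denominator $a_1 rk(E_1)+a_2 rk(E_2)$ equals $2$, while the two inequalities in \eqref{4} force $\chi(E_1)+\chi(E_2)=\chi+2$, so that $\chi(E_1,E_2,\overrightarrow{T})=\chi(E_1)+\chi(E_2)-2=\chi$ and hence $\mu((E_1,E_2,\overrightarrow{T}))=\chi/2$. The problem then becomes: for every proper sub-triple $(G_1,G_2,\overrightarrow{U})$ with $r_i:=rk(G_i)$, establish
$$2\big(\chi(G_1)+\chi(G_2)-r_2\big)<\chi\,(a_1 r_1+a_2 r_2).$$

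Next I would run through the admissible pairs $(r_1,r_2)\in\{0,1,2\}^2$, excluding $(0,0)$ and the full triple. The three inputs are: (i) semistability of $E_i$, giving $\chi(G_i)/r_i\le\chi(E_i)/2$ for any rank-$r_i$ subsheaf, with strict inequality when $E_i$ is stable and $0<r_i<2$; (ii) the inequalities \eqref{4} together with $\chi(E_1)+\chi(E_2)=\chi+2$; and (iii) the compatibility of $\overrightarrow{U}$ with $\overrightarrow{T}$, namely $\overrightarrow{T}(\mathrm{im}(G_1(p)\to E_1(p)))\subseteq \mathrm{im}(G_2(p)\to E_2(p))$. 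The cases $(0,1)$, $(0,2)$, and $(1,2)$ follow from semistability of the relevant $E_i$ and the bound $\chi(E_2)<a_2\chi+2$ alone; the case $(2,0)$ uses (iii), since $\overrightarrow{T}\neq 0$ and $G_2=0$ force $\mathrm{im}(G_1(p))\subseteq\ker\overrightarrow{T}\subsetneq E_1(p)$, so the fibre map drops rank at $p$, whence $\chi(G_1)\le\chi(E_1)-1<a_1\chi$, which suffices. In each of these the inequality comes out strict after clearing denominators.

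The delicate cases are the ones sitting exactly on the semistable boundary, where (i) yields only a non-strict bound: these are $(1,1)$, $(1,0)$, and $(2,1)$. Here the decisive device is parity. Since $\chi$ is odd and $\chi(E_1)+\chi(E_2)=\chi+2$, exactly one of $\chi(E_1),\chi(E_2)$ is odd, and a rank-$2$ semistable bundle of odd Euler characteristic is automatically stable. For $(1,1)$ one gets $2(\chi(G_1)+\chi(G_2))\le\chi(E_1)+\chi(E_2)=\chi+2$, and the automatic stability of the odd component makes one summand strict, so the target inequality holds with no further input. For $(1,0)$ and $(2,1)$, however, the bound is controlled by a single component (of $E_1$ and of $E_2$ respectively), and semistability of that component alone leaves $D\ge 0$; one must invoke the standing hypothesis that at least one of $E_1,E_2$ is stable, matched so that the component carrying the even Euler characteristic is the stable one, to upgrade to a strict inequality. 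After that, combining with \eqref{4}, the integrality of $\chi(G_i)$, and the hypothesis that $a_1\chi\notin\mathbb{Z}$ closes the gap.

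I expect the main obstacle to be precisely this upgrade from $\le$ to $<$ in the cases $(1,0)$ and $(2,1)$: semistability is not enough, and one must track carefully which bundle's stability governs each sub-triple and verify that the residual inequality survives after clearing the weighted denominators $a_1 r_1+a_2 r_2$. Organizing the analysis so that each potentially destabilizing sub-triple is paired with the correct strict input — the nonvanishing of $\overrightarrow{T}$ in the cases with $G_2=0$, automatic stability via odd Euler characteristic for the balanced case, and the stability hypothesis for the two remaining borderline cases — is the only genuinely non-routine part; everything else reduces to arithmetic with \eqref{4}.
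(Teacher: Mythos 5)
Your reduction of stability to the inequality $2(\chi(G_1)+\chi(G_2)-r_2)<\chi\,(a_1r_1+a_2r_2)$ and the run through the rank pairs $(r_1,r_2)$ is exactly the case-by-case analysis the paper gestures at (its own proof is a single sentence), and your cases $(0,1)$, $(0,2)$, $(1,2)$, $(2,0)$ and $(1,1)$ are handled correctly. The gap is in how you close the borderline cases $(1,0)$ and $(2,1)$. You invoke the hypothesis ``at least one of $E_1,E_2$ is stable, matched so that the component carrying the even Euler characteristic is the stable one.'' The hypothesis provides no such matching: since $\chi$ is odd, exactly one of $\chi(E_1),\chi(E_2)$ is odd, and a rank-$2$ semistable bundle of odd Euler characteristic (equivalently, odd degree) is automatically stable; hence ``at least one is stable'' is satisfied for free by the odd component and says nothing whatsoever about the even one. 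When the even component is strictly semistable, the strictness you need in these two cases is simply not available.

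Indeed, your own analysis, pushed honestly to the end, produces a counterexample to the lemma as stated. Say $\chi(E_2)$ is even (the paper's normalization), take $E_1$ stable, $E_2=L\oplus L^{-1}$ with $2\chi(L)=\chi(E_2)$ (strictly semistable), and take $\overrightarrow{T}$ of rank one with $\mathrm{Im}(\overrightarrow{T})$ equal to the image of $L(p)$ in $E_2(p)$. All hypotheses of the lemma hold, yet $(E_1,L,U)$, with $U$ induced by $\overrightarrow{T}$, is a legitimate proper sub-triple (the compatibility $\overrightarrow{T}(E_1(p))\subseteq L(p)$ holds), and $\mu(E_1,L,U)-\mu(E_1,E_2,\overrightarrow{T})=\frac{2\chi(E_1)+\chi(E_2)-2-(1+a_1)\chi}{2(1+a_1)}=\frac{\chi(E_1)-a_1\chi}{2(1+a_1)}>0$ by \eqref{4}, so the triple is not even semistable. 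The conclusion is correct only under a stronger hypothesis: that the even-$\chi$ component (equivalently, both components) is stable, or, for rank-one $\overrightarrow{T}$, that no destabilizing line subbundle of $E_1$ has fibre $\ker\overrightarrow{T}$ and none of $E_2$ has fibre containing $\mathrm{Im}(\overrightarrow{T})$ --- which is precisely what parabolic stability supplies in the paper's application to $N\simeq N_1\times N_2$. So the step where you ``match'' the stability hypothesis to the even component is not a fixable presentation issue; it is the precise point where both your proof and the lemma itself break. (A minor further omission: proper sub-triples with $(r_1,r_2)=(2,2)$ should also be listed, though that case is immediate.)
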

The intersection subvariety $N$ is given by
\begin{eqnarray*}
   \{[E_{1},E_{2},\overrightarrow{T}] \in M_{12}~|~\text{rk}~(\overrightarrow{T}) \,=\, 1\},
\end{eqnarray*}
which can be identified with
\begin{eqnarray*}
 \{[E_{1}^{\prime},E_{2}^{\prime},\overleftarrow{S}] \in M_{21}~|~\text{rk}~(\overleftarrow{S}) \,=\, 1\}.
\end{eqnarray*}
Further $N$ can be described as a product $N_1 \times N_2$, where $N_1$ and $N_2$ are rank $2$ parabolic moduli spaces over $X_1$ and $X_2$ with Euler characteristics $\chi_1$ and $\chi_2$ respectively satisfying the inequalities \eqref{4}, having full flag and parabolic weights $(a_1/2,a_2/2)$. In characteristic $0$, this was proved in \cite[Theorem 6.1]{N-S}. Here we will give a characteristic-free proof.
\begin{proposition}
$N \,\simeq\, N_1 \times N_2$
\end{proposition}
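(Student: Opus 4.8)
The plan is to exhibit an explicit isomorphism $\phi\colon N \to N_1\times N_2$ together with its inverse, carried out at the level of families so that the argument remains valid in every characteristic. A point of $N$ is an isomorphism class $[E_1,E_2,\overrightarrow{T}]$ of stable triples with $E_1,E_2$ rank $2$ bundles and $\mathrm{rk}(\overrightarrow{T})=1$. Since $E_1(p)$ and $E_2(p)$ are $2$-dimensional, a rank-one map $\overrightarrow{T}\colon E_1(p)\to E_2(p)$ has a $1$-dimensional kernel $K_{\overrightarrow{T}}\subset E_1(p)$ and a $1$-dimensional image $I_{\overrightarrow{T}}\subset E_2(p)$; these are full flags, i.e.\ quasi-parabolic structures at $p$ on $E_1$ and on $E_2$. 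I would therefore define $\phi$ on points by $[E_1,E_2,\overrightarrow{T}]\mapsto\big([E_1,K_{\overrightarrow{T}}],[E_2,I_{\overrightarrow{T}}]\big)$, the two factors carrying the full flag and parabolic weights $(a_1/2,a_2/2)$. First I would check well-definedness on isomorphism classes: an isomorphism of triples $(h_1,h_2)$ carries $K_{\overrightarrow{T}}$ and $I_{\overrightarrow{T}}$ isomorphically onto the kernel and image of the target map, by commutativity of the defining square in Definition \ref{definition_of_isomorphism_class_of_triples}.

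For the inverse, given $([E_1,\ell_1],[E_2,\ell_2])\in N_1\times N_2$ with lines $\ell_1\subset E_1(p)$ and $\ell_2\subset E_2(p)$, observe that $E_1(p)/\ell_1$ and $\ell_2$ are both $1$-dimensional, so any isomorphism $E_1(p)/\ell_1\xrightarrow{\sim}\ell_2$ yields a rank-one map $\overrightarrow{T}\colon E_1(p)\twoheadrightarrow E_1(p)/\ell_1\xrightarrow{\sim}\ell_2\hookrightarrow E_2(p)$ with $\ker\overrightarrow{T}=\ell_1$ and image $\ell_2$. Different choices of the isomorphism differ by a nonzero scalar and hence produce isomorphic triples, so $\psi\colon([E_1,\ell_1],[E_2,\ell_2])\mapsto[E_1,E_2,\overrightarrow{T}]$ is well defined and is visibly inverse to $\phi$ on points.

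Next I would verify the stability dictionary: the triple $(E_1,E_2,\overrightarrow{T})$ with $\mathrm{rk}(\overrightarrow{T})=1$ is stable exactly when the quasi-parabolic bundles $(E_1,\ell_1)$ and $(E_2,\ell_2)$ are parabolic stable for the weights $(a_1/2,a_2/2)$. Sub-triples of $(E_1,E_2,\overrightarrow{T})$ arise from subbundles $G_i\subset E_i$ whose fibres at $p$ are compatible with $\overrightarrow{T}$, and the comparison of $\mu$ for such sub-triples translates, once the contribution of the flag line is recorded through the weights $a_i/2$, into the comparison of parabolic slopes; Remark \ref{remark_after_divisor_lemma} supplies the passage between semistability of the triple and semistability of the underlying bundles, while the refinement that registers whether a given line subbundle passes through $\ell_i$ produces precisely the parabolic-weight correction. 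This case analysis over line subbundles, carried out intrinsically so as to remain valid in positive characteristic, is the step I expect to be the main obstacle, since it is where the specific value $(a_1/2,a_2/2)$ of the weights must be pinned down and shown to make the two stability notions coincide.

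Finally, to upgrade the bijection to an isomorphism of varieties I would pass to universal families. Since $M_{12}$ is a fine moduli space, $N$ carries a universal triple, which over the node gives a morphism $\overrightarrow{\mathcal T}\colon \mathcal E_1(p)\to\mathcal E_2(p)$ of rank $2$ vector bundles on $N$ of constant rank $1$; hence $\ker\overrightarrow{\mathcal T}$ and $\mathrm{im}\,\overrightarrow{\mathcal T}$ are line subbundles, defining families of quasi-parabolic bundles over $N$ and thus, by the universal property of the fine moduli spaces $N_1$ and $N_2$, a morphism $N\to N_1\times N_2$ inducing $\phi$. Conversely the universal parabolic bundles on $N_1$ and $N_2$, together with the tautological rank-one map assembled as in the second paragraph, form a family of stable triples over $N_1\times N_2$, inducing $\psi$. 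These two morphisms are mutually inverse because they are so on closed points, yielding $N\simeq N_1\times N_2$.
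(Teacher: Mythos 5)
Your maps are the right ones --- on points your $\phi$ is exactly the paper's map $D$ (kernel flag on $E_1$, image flag on $E_2$) and your $\psi$ is the paper's map $\overline{C}$ --- but there is a genuine gap: the ``stability dictionary'' in your third paragraph, which you yourself flag as ``the step I expect to be the main obstacle,'' is never proved, and it is precisely the mathematical content of the proposition. Without it you do not know that $\phi$ lands in $N_1\times N_2$ (i.e.\ that a stable triple with $\mathrm{rk}(\overrightarrow{T})=1$ yields \emph{parabolic} semistable bundles for the weights $(a_1/2,a_2/2)$), nor that $\psi$ lands in $N$ (i.e.\ that the assembled triple is stable); the bijection, the scalar-ambiguity observation, and the family-level upgrade are all formal by comparison. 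Note also that the two implications are not symmetric and are handled differently in the paper: for $\psi$, one checks that parabolic semistability for these particular weights forces both underlying bundles to be semistable with at least one of them stable (this uses that $\chi=\chi_1+\chi_2-2$ is odd, so exactly one $\chi_i$ is odd, together with the smallness of the weights $a_i/2$), and then Lemma \ref{divisor} immediately gives stability of the triple for \emph{any} nonzero $\overrightarrow{T}$ --- no case analysis over sub-triples is needed; for $\phi$, the passage from triple stability to parabolic stability is the content of \cite[Theorem 6.1]{N-S}, which the paper invokes (Remark \ref{remark_after_divisor_lemma} only gives semistability of the underlying bundles, not the parabolic refinement). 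A complete write-up must supply these two arguments; as it stands your proof reduces the proposition to an unproved claim.

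A secondary, repairable defect is in the family-level upgrade. You invoke universal parabolic bundles on $N_1$ and $N_2$, i.e.\ that these parabolic moduli spaces are fine; this is not established in the paper and should not be assumed without proof. Moreover, even granting universal families, your ``tautological rank-one map'' over $N_1\times N_2$ does not exist as an honest bundle map: the choice of isomorphism $E_1(p)/\ell_1\simeq\ell_2$ globalizes only up to a line-bundle twist, so what one naturally gets is a section of a projectivized Hom bundle, not a family of triples. The paper avoids both problems at once by working $G$-equivariantly upstairs on the quot schemes $R_1\times R_2$: the scalar ambiguity is absorbed into the projective bundle $\mathbb{P}(\mathcal{H}om(F_1,F_2))$, the resulting morphism $C$ is $G$-equivariant, and it descends to the GIT quotients to give $\overline{C}:N_1\times N_2\to M_{12}$. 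If you want to keep your formulation, you should either reproduce that descent argument, or choose local trivializations of the twisting line bundle and glue the resulting local morphisms to $M_{12}$ using the coarse moduli property; either way this needs to be said, since ``mutually inverse because they are so on closed points'' is only a valid conclusion once both morphisms of varieties have actually been constructed (and the targets are separated and reduced).
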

\begin{proof}
Let $\mathbf F_i$ be the universal quotient sheaf on $X_i \times R_i$ parametrizing vector bundles with Euler characteristic $\chi_{i}$, where $R_i$ is the quot scheme parametrizing locally free quotients satisfying certain cohomological conditions (see \cite[Theorem 5.3]{N-S} for details). Let $F_i\,=\, p_{i}^{*} (\mathbf F_i\mid_{\{p\}\times R_i})$ on $R_1 \times R_2$, where $p_i$'s are projections from $R_1 \times R_2$ to $R_i$'s. Consider the
projective bundle $\mathbb P(F_1^*)$. The fibre over any closed point $(q_1,q_2)$ parametrizes all surjective $1$-dimensional quotients $\{F_1(q_1) \rightarrow W(q_1)\}$.
Similarly, for the projective bundle $\mathbb P(F_2)$, the fiber over any closed point $(q_1,q_2)$ parametrizes all $1$ dimensional subspaces $\{V(q_2) \subset F_2(q_2)\}$.
Let $\mathbb P(F_1^*)^{ss}$ (resp. $\mathbb P(F_2)^{ss}$) denote the set of closed points of $\mathbb P(F_1^*)$ (resp. $\mathbb P(F_2)$) such that corresponding parabolic bundle on $X_1$ (resp. on $X_2$) is parabolic semi-stable (= parabolic stable)\footnote{Since we are in rank $2$ and there is only one parabolic point the parabolic stability coincides with parabolic semi-stability} with respect to parabolic weights $(\frac{a_1}{2},\frac{a_2}{2})$. Let $((F_1(q_1) \rightarrow W(q_1)\, ,\,V(q_2) \subset F_2(q_2))$ be a point in $\mathbb P(F_1^*)^{ss} \times \mathbb P(F_2)^{ss}$ and $\lambda:\,W(q_1) \simeq V(q_2)$ be an isomorphism. By composing $\lambda$ with the quotient $F_1(q_1) \rightarrow W(q_1)$, we get a rank $1$ linear map  $T_\lambda$ from $F_1(q_1) \rightarrow F_2(q_2)$. This gives rise to a triple $(\mathbf F_{1,q_1},\mathbf F_{2,q_2}, \overrightarrow{T_{\lambda}})$. From the
choice of parabolic weights one can easily see that $\mathbf F_{1,q_1}$ and $\mathbf F_{2,q_2}$ are semi-stable on $X_1$ and $X_2$ respectively and at least one of them is stable. So by Lemma \ref{divisor}, the triple $(\mathbf F_{1,q_1},\mathbf F_{2,q_2}, \overrightarrow{T_{\lambda}})$ is semi-stable. Since there is nothing canonical in $\lambda$, we naturally get an element in $\mathbb P(\mathcal Hom(F_1,F_2))^{ss}$. Hence we have a commutative diagram
$$
\xymatrix{
\mathbb P(F_1^*)^{ss} \times_{R_1 \times R_2} \mathbb P(F_2)^{ss} \ar[r]^{C} \ar[dr] & \mathbb P(\mathcal Hom(F_1,F_2))^{ss}\ar[d] \\
&R_1 ^{ss} \times R_2^{ss}.
}
$$
There is a natural action of the group $G\,=\,PGl(k_1) \times PGl(k_2)$ (for a suitable choice of $k_1$ and $k_2$) on both the domain and codomain of $C$ and since $C$ is $G$-equivariant, the morphism $C$ descends to respective quotients. So we have the following commutative diagram
$$
\xymatrix{
N_1 \times N_2 \ar[r]^{\overline{C}} \ar[dr] & M_{12} \ar[d] \\
& M_1 \times M_2,
}
$$
where $M_1$ (resp. $M_2$) is the moduli space of rank $2$ stable (resp. semi-stable) vector bundles with Euler characteristic $\chi_1$ (resp. $\chi_2$) over $X_1$ (resp. $X_2$). It is clear that the image of $\overline{C}$ is inside $N$. Further  there is a natural map $D:\,N \rightarrow N_1 \times N_2$ which sends any semi-stable triple
$(E_1,E_2,\overrightarrow{T})$ to $((E_1,  E_1(p) \rightarrow E_1(p)/ \text{Ker}(\overrightarrow{T})), (E_2, 0\subset \text{Image}(\overrightarrow{T}) \subset E_2(p)))$ (see also \cite[Theorem 6.1]{N-S}). One can
easily check that $D \circ \overline{C}\,=\,\overline{C} \circ D\,=\,\text{id}$.
\end{proof}

$\mathbf{The~notion~of~fixed~determinant~moduli~space :}$

Let $E$ be an element of $M(2,{\bf a},\chi)$ and $(E_{1},E_{2},\overrightarrow{T})$, $(E_{1}^{\prime},E_{2}^{\prime},\overleftarrow{S})$ be the triples corresponding to $E$. Let $\chi_{i}:= \chi(E_{i})$ and $\chi_{i}^{\prime}:= \chi(E_{i}^{\prime}),$ for $i \,=\, 1,2.$ If $\chi_{i}$'s satisfy the inequalities in \eqref{4}, then $E$ belongs to $M_{12}$ and if $\chi_{i}^{\prime}$'s satisfy the inequalities in \eqref{5}, then $E$ belongs to $M_{21}.$ Let $J^{d_{i}}(X_{i})$ be the Jacobian of line bundles of \text{\text{deg}}ree $d_{i}$ on $X_{i},$ where $d_{i} \,=\, \chi_{i}-2(1-g_{i}),$ for $i \,=\, 1,2.$ By \cite[Proposition 7.1]{N-S}, there is a well-defined surjective morphism
\begin{eqnarray*}
 \text{det}\, : \,M(2,{\bf a},\chi \neq 0) & \longrightarrow & J^{d_{1}}(X_{1}) \times J^{d_{2}}(X_{2}),
\end{eqnarray*}
given by
\begin{eqnarray*}
E      & \mapsto &  (\Lambda^{2}(E_{1}),\Lambda^{2}(E_{2})) \,\,\hspace{.3cm} \text{if}  \,\,\hspace{.3cm} E \in M_{12}, ~\,\,\,\text{and} \\
E      & \mapsto &  \Psi((\Lambda^{2}(E_{1}^{\prime})),\Lambda^{2}(E_{2}^{\prime})) \,\,\hspace{.3cm} \text{if} \,\,\hspace{.3cm} E \in M_{21},
\end{eqnarray*}
where
\begin{equation*}
 \Psi:J^{d_{1}+1}(X_{1}) \times J^{d_{2}-1}(X_{2})  \rightarrow  J^{d_{1}}(X_{1}) \times J^{d_{2}}(X_{2})
\end{equation*}
is an isomorphism defined by
\begin{equation*}
 (L_{1},L_{2})  \mapsto  (L_{1} \otimes \mathcal{O}_{X_{1}}(-p),L_{2} \otimes \mathcal{O}_{X_{2}}(p)).
\end{equation*}
Now let us fix $L_{1} \in J^{d_{1}}(X_{1})$ and $L_{2} \in J^{d_{2}}(X_{2})$, we write ${\bf \xi}\,=\,(L_1,L_2)$. Then the fixed determinant moduli space $M(2,\mathbf{a},\chi,\xi)$ is by definition $\text{det}\,^{-1}({\bf \xi}).$ By \cite[Proposition 7.2]{N-S}, it is reduced.
Let $\text{det}\,_{12}$ (resp. $\text{det}\,_{21}$) be the morphism $\text{det}\,|_{M_{12}}$ (resp. $\text{det}\,|_{M_{21}}$). For notational convenience we write $M_{12}({\bf \xi})$ (resp. $M_{21}({\bf \xi})$) for $\text{det}\,_{12}^{-1}({\bf \xi})$ (resp. $\text{det}\,_{21}^{-1}({\bf \xi})$).
Then we have
\begin{eqnarray*}
 M_{12}({\bf \xi})  & = & \{[E_{1},E_{2},\overrightarrow{T}] \in M_{12} ~|~ \Lambda^{2}(E_{1}) \,\simeq\, L_{1},~\Lambda^{2}(E_{2}) \,\simeq\, L_{2} \},
\end{eqnarray*}
and
\begin{eqnarray*}
M_{21}({\bf \xi})   & = & \{[E_{1}^{\prime},E_{2}^{\prime},\overleftarrow{S}] \in M_{21} ~|~ \Lambda^{2}(E_{1}^{\prime}) \,\simeq\, L_{1} \otimes \mathcal{O}_{X_{1}}(p),\Lambda^{2}(E_{2}^{\prime}) \,\simeq\, L_{2} \otimes \mathcal{O}_{X_{2}}(-p)) \}.
\end{eqnarray*}
By \cite[Proposition 6.5]{S-B}, the fixed determinant moduli space is a connected, projective scheme with exactly two smooth irreducible components $M_{12}({\bf \xi})$ and $M_{21}({\bf \xi})$, meeting transversally along the smooth divisor $N({\bf \xi}) \,=\, M_{12}({\bf \xi}) \cap N$ (which is identified with $M_{21}({\bf \xi}) \cap N$). Since $\chi$ is assumed to be an odd integer, and $\chi \,=\, \chi_{1} + \chi_{2}-2,$ we can conclude that either $\chi_{1}$ is odd or $\chi_{2}$ is odd and not both (Same argument applies to $\chi_{1}^{\prime}$ and $\chi_{2}^{\prime}$ also).
\subsection{The Intersection Homology}

Suppose $Y$ is a compact oriented manifold such that $\text{dim}~_{\mathbb{R}}Y \,=\, n.$ Then Poincar$\acute{\text{e}}$ duality states  that there exists a perfect pairing
\begin{equation}\label{Poincare_duality}
 H_i(Y) \otimes H_{n-i}(Y) \rightarrow \mathbb{C},
\end{equation}
for each $0 \leq i \leq n,$ and in particular
\begin{equation*}
 \text{dim}_{\mathbb{C}}H_i(Y) \,= \, \text{dim}_{\mathbb{C}}H_{n-i}(Y).
\end{equation*}
It is known that Poincar$\acute{\text{e}}$ duality does not hold for the cohomology of singular spaces (cf. \cite[pp. 3-4]{K-W} for examples). To remedy this, a generalization to singular spaces of the Poincar$\acute{\text{e}}$-Lefschetz theory of intersections of homology cycles on manifolds, was introduced by Goresky and Macpherson in \cite{G-M} and \cite{G_M-II}. Their main objective was to study the intersection of cycles on an $n$ dimensional oriented pseudomanifold.

\begin{definition}[\cite{G_M-II} \S 1.1]
 A topological pseudomanifold of dimension $n$ is an $n$-dimensional stratified paracompact Hausdorff topological space $Y$ which admits a
stratification
\begin{equation}\label{stratification}
 Y \,=\, Y_n \supset Y_{n-1} \supset Y_{n-2} \supset \cdots \supset Y_1 \supset Y_0
\end{equation}
such that $Y_{n-1} \,=\, Y_{n-2}$ and $Y-Y_{n-1}$ is dense in $Y.$
\end{definition}
For a topological pseudomanifold of dimension $n,$ they introduced a new class of groups $IH_{*}^{\overline{p}}(Y)$ called the intersection homology groups. These groups depend on the choice of a perversity $\overline{p}$ which is a function from the set $\{2,\dots,n\}$ to $\mathbb{N}$ such that $\overline{p}(2) \,=\, 0$ and $\overline{p}(i+1) \,=\, \overline{p}(i)$ or $\overline{p}(i)+1$ (cf. \cite[Definition 4.2.3]{K-W}).

%
%
%

Further, if $\overline{p}, \overline{q}, \text{and}~ \overline{r}$ are perversities such that $\overline{p}+\overline{q} \leq \overline{r},$ then there is a unique intersection pairing
 \begin{equation}\label{intersection_pairing}
  \cap: IH_{i}^{\overline{p}}(Y) \times IH_{j}^{\overline{q}}(Y) \rightarrow IH_{i+j-n}^{\overline{r}}(Y)
 \end{equation}
 such that $[C \cap D] \,=\, [C] \cap [D]$, for every dimensionally transverse pair of cycles $C \in IH_{i}^{\overline{p}}(Y)$ and $D \in IH_{j}^{\overline{q}}(Y)$ (cf. \cite[\S 2.1 Definition and \S 2.3 Theorem 1]{G-M}).

 Moreover, if $i$ and $j$ are complementary dimensions ($i+j \,=\, n$), $\overline{t}$ is the top perversity (which is defined to be the function $i \mapsto (i-2)$ for $2 \leq i \leq n$) and if $\overline{p}$ and $\overline{q}$ are complementary perversities ($\overline{p}+\overline{q} \,=\, \overline{t}$) then the pairing
 \begin{equation*}
  IH_{i}^{\overline{p}}(Y) \times IH_{j}^{\overline{q}}(Y) \xrightarrow{\cap}  IH_{0}^{\overline{t}}(Y) \xrightarrow{\epsilon} \mathbb{Z}
 \end{equation*}
 is a perfect pairing when tensored with rationals $\mathbb{Q},$ where
 \begin{equation*}
  \epsilon : IH_{0}^{\overline{t}}(Y) \rightarrow \mathbb{Z}
 \end{equation*}
 is the "augmentation" which counts points with multiplicities (cf. \cite[\S 3.3 Theorem]{G-M}).

It is also known that these intersection homology groups $IH_{*}^{\overline{p}}(Y)$ of a topological pseudomanifold $Y$ are topological invariants . This fact is proved using the sheaf theoretic construction of the intersection complex $\mathbf{I\dot{C}}$ (cf.\cite[\S 3 and \S 4]{G_M-II}).

Another important property of intersection homology groups is their behavior under normalization. The following theorems will be very useful in \S 5 where we compute the intersection homology groups of the fixed determinant moduli space $M(2,\mathbf{a},\chi,\xi)$.

\begin{theorem}[\cite{G-M} \S 4.2]\label{(G_M)_Theorem_4.2}
 If $Y$ is a pseudo-manifold with normalization $\pi: \tilde{Y}\rightarrow Y$, then the map $\pi$ induces isomorphisms $IH_{i}^{\overline{p}}(\tilde{Y})\,\simeq\, IH_{i}^{\overline{p}}(Y)$ for any perversity $\overline{p}.$
\end{theorem}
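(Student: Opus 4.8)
The plan is to realize the normalization $\pi:\tilde{Y}\to Y$ as a finite, stratum-preserving morphism that is a homeomorphism over the dense regular stratum, and then to invert $\pi_*$ on intersection chains by lifting cycles and taking closures. First I would equip $\tilde{Y}$ with the stratification $\tilde{Y}_k := \pi^{-1}(Y_k)$, so that $\pi$ becomes stratum-preserving. Since $Y$ is a pseudomanifold with $Y_{n-1}=Y_{n-2}$, the singular locus $\Sigma := Y_{n-2}$ has real codimension at least $2$, and over the dense open set $U := Y\setminus\Sigma$ the normalization is an isomorphism. Thus $\pi|_{\pi^{-1}(U)}\colon \pi^{-1}(U)\to U$ is a homeomorphism, $\tilde{Y}$ is again an $n$-dimensional (normal) pseudomanifold, and $IH_{\ast}^{\overline{p}}(\tilde{Y})$ is defined.

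Next I would verify that pushforward of geometric chains induces a well-defined map $\pi_*\colon IH_{i}^{\overline{p}}(\tilde{Y})\to IH_{i}^{\overline{p}}(Y)$. Because $\pi$ is proper, finite, and stratum-preserving, for any $\overline{p}$-allowable chain $\xi$ on $\tilde{Y}$ one has $\dim\bigl(\pi_*(\xi)\cap Y_{n-k}\bigr)\le\dim\bigl(\xi\cap\tilde{Y}_{n-k}\bigr)\le i-k+\overline{p}(k)$, so allowability is preserved; as $\pi_*$ commutes with $\partial$, it descends to intersection homology.

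The heart of the argument is the construction of the inverse by closure of lifts. Given an allowable $i$-cycle $\zeta$ on $Y$, the allowability condition at $k=2$ together with $\overline{p}(2)=0$ gives $\dim(\zeta\cap\Sigma)\le i-2$. Over $U$ the unique lift $\pi^{-1}(\zeta\cap U)$ exists via the homeomorphism, and I would set $\tilde{\zeta} := \overline{\pi^{-1}(\zeta\cap U)}$. Since $\pi$ is continuous, proper, and finite, it maps $\tilde{\zeta}\cap\pi^{-1}(\Sigma)$ into $\zeta\cap\Sigma$, forcing $\dim\bigl(\tilde{\zeta}\cap\pi^{-1}(\Sigma)\bigr)\le i-2$ and hence allowability of $\tilde{\zeta}$. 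Moreover $\partial\tilde{\zeta}$ is supported on $\pi^{-1}(\Sigma)$ (as $\zeta$ is a cycle on $U$), so it is an $(i-1)$-chain carried by a set of dimension $\le i-2$ and therefore vanishes; thus $\tilde{\zeta}$ is a cycle with $\pi_*\tilde{\zeta}=\zeta$. Applying the same closure-of-lift construction to allowable $(i+1)$-chains that realize homologies shows $\zeta\mapsto[\tilde{\zeta}]$ is well defined on homology and is a two-sided inverse to $\pi_*$.

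The main obstacle is precisely the control of $\tilde{\zeta}$ along $\pi^{-1}(\Sigma)$: one must check that forming the closure neither enlarges the intersection with the singular set beyond dimension $i-2$ nor creates spurious boundary. This is where the codimension-$\ge 2$ hypothesis and $\overline{p}(2)=0$ are indispensable, and where the normality of $\tilde{Y}$ (connectedness of the links of its singular strata) guarantees that the separated branches close up to an honest allowable cycle. A cleaner but less elementary alternative is the sheaf-theoretic route: since $\pi$ is finite, $R\pi_*=\pi_*$ is exact and restricts to the identity over $U$, so $\pi_*\mathbf{I\dot{C}}_{\tilde{Y}}$ inherits the Deligne support and cosupport conditions characterizing $\mathbf{I\dot{C}}_{Y}$; uniqueness in that axiomatic characterization gives $\pi_*\mathbf{I\dot{C}}_{\tilde{Y}}\simeq\mathbf{I\dot{C}}_{Y}$, and passing to hypercohomology yields the asserted isomorphism.
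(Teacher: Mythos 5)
You should first be aware that the paper does not prove this statement at all: it is quoted, with citation, from Goresky--MacPherson \S 4.2 and used as a black box in \S 5, so the only argument to compare yours against is the original one in that reference. Your chain-level proof is in substance exactly that argument: allowability with $\overline{p}(2)=0$ forces every intersection $i$-chain to meet the singular locus in dimension at most $i-2$, the normalization is finite and a homeomorphism over the complement of that locus, and hence pushforward and closure-of-lift identify the intersection chain theories; this is correct, and your sheaf-theoretic alternative ($R\pi_{*}=\pi_{*}$ for finite maps plus uniqueness in the Deligne characterization of $\mathbf{I\dot{C}}$) is also a standard valid route. Two details should be tightened. First, allowability of $\tilde{\zeta}$ is a condition for every $k\geq 2$, not just $k=2$; you only verify the codimension-two bound and then assert allowability, but the same finiteness argument works uniformly, since $\pi\bigl(\tilde{\zeta}\cap\tilde{Y}_{n-k}\bigr)\subseteq\zeta\cap Y_{n-k}$ gives $\dim\bigl(\tilde{\zeta}\cap\tilde{Y}_{n-k}\bigr)\leq i-k+\overline{p}(k)$ for all $k$. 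Second, to get a genuine two-sided inverse you also need that every allowable cycle $\tilde{\xi}$ on $\tilde{Y}$ coincides with the closure of the lift of its own pushforward $\pi_{*}\tilde{\xi}$; this is again the same dimension count (their difference is an $i$-chain supported in dimension $\leq i-2$, hence zero), but without stating it you have only established surjectivity of $\pi_{*}$, not injectivity. Finally, normality of $\tilde{Y}$ (connectedness of links) is not actually what makes your closure argument work --- finiteness of $\pi$ together with codimension $\geq 2$ of the locus over which $\pi$ fails to be a homeomorphism suffices; normality is what the companion Theorem \ref{(G-M)_Theorem_4.3} requires, so your appeal to it at that point is a red herring rather than a load-bearing step.
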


\begin{theorem}[\cite{G-M} \S 4.3]\label{(G-M)_Theorem_4.3}
 Let $Y$ be a topologically pseudomanifold of dimension $n.$ If $Y$ is topologically normal then there are canonical isomorphisms $IH_{i}^{\overline{t}}(Y)\,\simeq\, H_{i}(Y)$ and $IH_{i}^{\overline{0}}(Y) \,\simeq\, H^{n-i}(Y),$
 where $\overline{t}$ is the top perversity, $\overline{0}$ is the zero perversity, $H_{i}(Y)$ and $H^{n-i}(Y)$ stand for the usual singular homology and cohomology groups of $Y.$
\end{theorem}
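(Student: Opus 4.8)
The plan is to pass from the chain-level definition to the sheaf-theoretic description of intersection homology alluded to above, via the Deligne intersection complex $\mathbf{IC}^{\bullet}_{\overline{p}}$, where both isomorphisms reduce to transparent hypercohomology computations, and then to deduce the two cases from one another by Verdier duality, the sheaf-level incarnation of the pairing \eqref{intersection_pairing}. First I would record the natural comparison maps. For the top perversity the allowability bound $\dim(|\xi|\cap Y_{n-k})\le i-k+\overline{t}(k)=i-2$ is the weakest one can impose, so every $\overline{t}$-allowable chain is an ordinary chain and the inclusion of chain complexes induces a canonical map $IH_{i}^{\overline{t}}(Y)\to H_{i}(Y)$; the zero-perversity case is most naturally phrased cohomologically and is handled through the sheaf description below. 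The content of the theorem is that topological normality upgrades these maps to isomorphisms.

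Write $j\colon U=Y\setminus Y_{n-2}\hookrightarrow Y$ for the inclusion of the nonsingular locus (an $n$-manifold, dense by the pseudomanifold axioms). The heart of the argument is to identify $\mathbf{IC}^{\bullet}_{\overline{0}}$ explicitly. For the zero perversity the support truncations $\tau_{\le \overline{0}(k)-n}=\tau_{\le -n}$ in the Deligne construction force $\mathbf{IC}^{\bullet}_{\overline{0}}$ to be concentrated in a single cohomological degree and to equal $(j_{*}\mathbb{Q}_{U})[n]$, the ordinary (non-derived) pushforward through all strata. The stalk of $j_{*}\mathbb{Q}_{U}$ at a point $y$ is $H^{0}$ of a punctured conical neighborhood $U\cap cL_{y}$, hence $\mathbb{Q}^{\#\pi_{0}(L_{y})}$, where $L_{y}$ is the link of $y$; thus $j_{*}\mathbb{Q}_{U}\simeq\mathbb{Q}_{Y}$ precisely when every link is connected. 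Topological normality is exactly this connectivity (equivalently $H_{n}(Y,Y\setminus y;\mathbb{Q})\simeq\mathbb{Q}$), so under the hypothesis $\mathbf{IC}^{\bullet}_{\overline{0}}\simeq\mathbb{Q}_{Y}[n]$. Consequently $IH_{i}^{\overline{0}}(Y)=\mathbb{H}^{-i}(Y;\mathbf{IC}^{\bullet}_{\overline{0}})\simeq\mathbb{H}^{n-i}(Y;\mathbb{Q}_{Y})=H^{n-i}(Y)$, which is the second isomorphism.

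For the top perversity I would invoke Verdier duality. Since $\overline{0}$ and $\overline{t}$ are complementary ($\overline{0}+\overline{t}=\overline{t}$), the complexes $\mathbf{IC}^{\bullet}_{\overline{0}}$ and $\mathbf{IC}^{\bullet}_{\overline{t}}$ are interchanged by the dualizing functor $\mathbb{D}_{Y}$ up to the appropriate shift; dualizing the constant sheaf produces the dualizing complex, so under normality $\mathbf{IC}^{\bullet}_{\overline{t}}\simeq\omega_{Y}$. As $Y$ is compact, $\mathbb{H}^{-i}(Y;\omega_{Y})=H_{i}^{\mathrm{BM}}(Y)=H_{i}(Y)$, giving $IH_{i}^{\overline{t}}(Y)\simeq H_{i}(Y)$ and matching the comparison map of the first paragraph. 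The main obstacle is the identification in the middle paragraph: one must verify through the support and cosupport (attachment) axioms that the constant sheaf is forced to coincide with $\mathbf{IC}^{\bullet}_{\overline{0}}$ exactly under normality and not otherwise — that is, isolating the attachment condition at the codimension-two stratum as equivalent to connectivity of links, while confirming that the zero-perversity truncations at all higher-codimension strata impose the same link-connectivity and impose nothing further.
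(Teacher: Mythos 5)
You should know at the outset that the paper contains no proof of this statement: it is imported verbatim, as background, from Goresky--MacPherson \cite{G-M}, \S 4.3, so there is nothing internal to compare your argument against, and any proof is necessarily ``a different route'' from the paper's. Measured against the cited source, your route is genuinely different as well: \cite{G-M} argues at the chain level with PL intersection chains, while you reconstruct the theorem from the sheaf-theoretic theory of \cite{G_M-II} (Deligne's construction and Verdier duality). Your sketch is essentially correct, and its skeleton is the standard modern proof: the zero-perversity truncations do collapse the Deligne complex to $(j_{*}\mathbb{Q}_{U})[n]$; normality is exactly what forces $j_{*}\mathbb{Q}_{U}\simeq\mathbb{Q}_{Y}$; and duality between the complementary perversities $\overline{0}$ and $\overline{t}$, together with $\mathbb{H}^{-i}(Y;\omega_{Y})\simeq H_{i}^{\mathrm{BM}}(Y)$, gives the top-perversity half. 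Three points deserve attention, however. (1) Your argument quietly rests on the main theorem of \cite{G_M-II}, namely that hypercohomology of the Deligne complex computes the chain-theoretic groups $IH_{i}^{\overline{p}}(Y)$ of \cite{G-M}; that input is far heavier than the statement being proved, which \cite{G-M} establishes by elementary geometric means --- this is the trade-off your approach makes in exchange for transparency and topological invariance. (2) The stalk of $j_{*}\mathbb{Q}_{U}$ at $y$ is $\mathbb{Q}^{\#\pi_{0}(L_{y}\cap U)}$, indexed by components of the \emph{regular part} of the link, not of the link itself; these counts agree only after one shows that connectivity of links at all points is inherited by the links themselves (an induction through the strata, using that the link of a point of $L_{y}$ coincides with the link of the corresponding nearby point of $Y$). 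This is exactly the verification you defer in your closing paragraph, and it is the one step you must actually carry out, since a priori a connected link can have disconnected regular part (e.g.\ a wedge of two spheres). (3) $\mathbb{H}^{-i}(Y;\omega_{Y})$ is Borel--Moore homology, so your last step uses compactness of $Y$, which is not part of the paper's definition of a topological pseudomanifold (quoted from \cite{G_M-II}, \S 1.1); this is harmless for the paper's application, where the theorem is applied to the projective variety $M$, but for noncompact $Y$ the statement requires the locally finite (Borel--Moore) chain model of intersection homology.
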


An important example of a pseudomanifold (which we shall be concerned with in this article) is that of a complex quasi-projective variety of pure dimension $n.$ It is a theorem of Whitney (\cite[Theorem 19.2]{Wh}) that any quasi-projective variety $Y$ of pure dimension $n$ has a Whitney stratification (cf. \cite[Definition 4.10.2]{K-W}). It is this stratification of $Y$ which makes it into a topological pseudomanifold (\cite [IV \S 2]{Bo}).

\begin{definition}\label{definition_of_IPP}
Let $Y$ be a complex quasi-projective variety of pure dimension $n.$ We define the intersection Poincar$\acute{e}$ polynomial of $Y$ with respect to the perversity $\overline{p}$ to be the polynomial $\sum_{j=0}^{2n}\beta_{j}^{\overline{p}}t^{j},$ where $\beta_{j}^{\overline{p}}=\text{dim}_{\mathbb{R}}(IH_{j}^{\overline{p}}(Y)).$
\end{definition}
%
%
\begin{remark}\label{normalization_related_remark}
  If $Y$ is a complex quasi-projective variety  then by \cite[\S 4.1  pp. 151]{G-M} , it is known that the algebraic normalization $\widetilde{Y_{alg}}$ of $Y$ is homeomorphic to the topological normalization $\tilde{Y}$ of $Y.$ So by the Theorem \ref{(G_M)_Theorem_4.2}, in order to compute the intersection homology groups for such a $Y,$ it is enough to compute it for $\widetilde{Y_{alg}}.$
\end{remark}

\subsection{Notation} \label{notation}
From now on, we will drop some extra baggages and proceed with the following notations -

$\bullet$ $X$ denotes the projective reducible nodal curve with two smooth components $X_{1}$ and $X_{2}$ meeting at the nodal point $p.$

$\bullet$ $\xi=(L_1,\mathcal{O}_{X_2}),$ where $L_1$ is an invertible sheaf on $X_1$ of \text{\text{deg}}ree $1.$

$\bullet$  By $M$ we mean the moduli space of rank 2, Euler-characteristic $3-2g$, stable torsion free sheaves over $X$ with the fixed determinant $\xi.$

$\bullet$ The moduli space $M_{12}=\{[E_{1},E_{2},\overrightarrow{T}] :~[E_{1},E_{2},\overrightarrow{T}]~\text{is an isomorphism class of stable triples}\\ \text{with the respective Euler characteristics satisfying inequalities \eqref{4}}, ~\text{rk}~(E_{i})=2, \\ \Lambda^{2}(E_{1})\,\simeq\,L_{1}, \Lambda^{2}(E_{2})\,\simeq\,\mathcal{O}_{X_{2}},~\text{deg}~(L_{1})=1 \}.$

$\bullet$ The moduli space $M_{21}=\{[E_{1}^{\prime},E_{2}^{\prime},\overleftarrow{S}] :~[E_{1}^{\prime},E_{2}^{\prime},\overleftarrow{S}]~\text{ is an isomorphism class of stable triples}\\ \text{with the respective Euler characteristics satisfying inequalities \eqref{5}}, ~\text{rk}~(E_{i}^{\prime})=2, \\
\Lambda^{2}(E_{1}^{\prime})\,\simeq\,L_{1} \otimes \mathcal{O}_{X_1}(p), \Lambda^{2}(E_{2}^{\prime})\,\simeq\,\mathcal{O}_{X_{2}}(-p)\}.$

$\bullet$ Let $N = M_{12} \cap M_{21}$ and $M_{12}^{0} = M_{12} \backslash N.$

$\bullet$ The moduli space $M_{1}=\{ [E_{1}] :~ [E_{1}]~\text{is the isomorphism class of stable bundles on}~X_{1},\\
          ~\text{rk}~(E_{1})=2,~\Lambda^{2}(E_{1})\,\simeq\,L_{1}\}.$

$\bullet$ The moduli space $M_{2}=\{[E_{2}]:~[E_{2}]~\text{is the gr-equivalence class of semi-stable vector bundles} \\ \text{on}~X_{2}, ~\text{rk}~(E_{2})=2,~\Lambda^{2}(E_{2})\,\simeq\, \mathcal{O}_{X_{2}}\}.$

$\bullet$ $N_{1} = \{E_{1_{*}}:=(E_{1}, 0 \subset F^{2}E_{1}(p) \subset E_{1}(p)) :~E_{1_{*}}~\text{is parabolic semi-stable bundle with respect to} \\ \text{the parabolic weight}~(\frac{a_{1}}{2},\frac{a_{2}}{2})~ \text{on}~X_{1},~\text{where} ~0<a_{1}<a_{2}<1,~\text{rk}~(E_{1})=2,~\Lambda^{2}(E_{1})\,\simeq\,L_{1}\}.$

$\bullet$ $N_{2} = \{E_{2_{*}}:=(E_{2}, 0 \subset F^{2}E_{2}(p) \subset E_{2}(p)) :~E_{2_{*}}~\text{is parabolic semi-stable bundle with respect to}\\ \text{the parabolic weight}~(\frac{a_{1}}{2},\frac{a_{2}}{2})~\text{on}~X_{2},~\text{where}~0<a_{1}<a_{2}<1,~\text{rk}~(E_{2})=2,~\Lambda^{2}(E_{2})\,\simeq\, \mathcal{O}_{X_{2}}\}.$

\section{Counting $\mathbb{F}_{q}$ rational points}
Throughout this section, we assume our base field to be an algebraically closed field $\overline{\mathbb F_{l}}$, where $l$ is a prime number. Let $M$ be the moduli space of rank $2$ stable torsion free sheaves on $X$ with Euler characteristic $\chi$ and fixed determinant $\xi$ as in Notation \ref{notation} such that $a_1\chi$ is not an integer. By \S 2, $M$ is a reduced, connected projective variety  with two smooth irreducible components $M_{12}$ and $M_{21}$ intersecting transversally on a smooth projective variety $N$. Hence without loss of generality, we can assume both $X$ and $M$ are defined over a finite field extension $\mathbb F_q$ of $\mathbb F_l$.

In this section, we are interested in computing $\mathbb F_q$-rational points of $M_{12}$ and $M_{21}$ respectively. We will explicitly compute it for the component $M_{12}$, the computation for other component follows similarly. Now we consider the following morphism-
\begin{equation} \label{fi}
  \Phi : M_{12} \rightarrow M_{1} \times M_{2}
\end{equation}
which sends the equivalence class of triple $[E_{1}, E_{2},\overrightarrow{T}]$ in $M_{12}$ to equivalence class of pairs $([E_{1}], [E_{2}])$ in $M_1 \times M_2$. By Lemma \ref{divisor} and Remark \ref{remark_after_divisor_lemma}, $\Phi$ is well defined and surjective. A priori, $\Phi$ is defined over $\overline{\mathbb {F}}_l$ but without loss of generality, we can assume that it is defined over $\mathbb F_{q}$.
 We have the following stratification on $M_{1} \times M_{2}$-
 \begin{equation}\label{stratification1}
 M_{1} \times M_{2} \, = \, (M_{1} \times M_{2}^{s}) \sqcup (M_{1} \times (K-K_{0})) \sqcup (M_{1} \times K_{0}),
\end{equation}
 where
 $M_{2}^{s}$ is the stable locus of vector bundles over $X_{2}$ with fixed determinant $\mathcal{O}_{X_{2}}$, $K$ is the Kummer variety associated to the Jacobian $J_{2}$ of $X_{2}$ which can be identified with the isomorphism classes of vector bundles of the form $(L \oplus L^{-1})$
 where L is a line bundle of degree 0 over $X_{2},$
 $$ K-K_{0} \, = \, \{ [L \oplus L^{-1}] ~:~ L \in J_{2} ,~  L^{2}~ \text{not isomorphic to}~ \mathcal{O}_{X_{2}} \},$$ and
 $$ K_{0} \, = \, \{[L \oplus L] ~:~ L \in J_{2} ,~  L^{2} \,\simeq\, \mathcal{O}_{X_{2}} \}.$$
 Since $K$ is a projective variety, without loss of generality we can assume $K$ is defined over $\mathbb{F}_{q}$.
It is also known that $K_{0}$ is a finite set having $2^{2g_{2}}$ points.
 Now since
 $$M_{12} \, = \, M_{12}^{0} \sqcup N ,$$ where $M_{12}^{0}$ and $N$ are as in \S \ref{notation},
we have $$N_{q}(M_{12}) \, = \, N_{q}(M_{12}^{0}) + N_{q}(N),$$
 where, by $N_{q}(V)$ we mean the number of $\mathbb{F}_{q}$-rational points of a quasi-projective variety $V$ defined over $\overline{\mathbb{F}}_{q}$.

 Denoting the restriction of the map $\Phi$ to the open set $M_{12}^{0}$ by $\phi$ and using the stratification \eqref{stratification1}, we obtain
 \begin{eqnarray}\label{Equation 3.2}
   N_{q}(M_{12}) & = & N_{q}(\phi^{-1}(M_{1}\times M_{2}^{s})) + N_{q}(\phi^{-1}(M_{1} \times (K-K_{0}))) \nonumber \\
   & & + N_{q}(\phi^{-1}(M_{1} \times K_{0})) + N_{q}(N).
 \end{eqnarray}


 \subsection{$\mathbf{Computation~~of~~N_{q}(\phi^{-1}(M_{1}\times M_{2}^{s}))}$}
 \vspace{1cm}

 Let
 \begin{equation}\label{A}
  A \,=\, \{ [L \oplus L^{-1}] \in K-K_{0}~|~L~\text{and}~L^{-1}\text{are both defined over}~\mathbb{F}_{q}\}
\end{equation}
and
\begin{eqnarray}\label{B}
  B \,=\, (K-K_0) - A.
\end{eqnarray}

Then it is clear that
\begin{equation}\label{N_q(A)}
 N_{q}(A)\, = \,(1/2)(N_{q}(J_{2})-2^{2g_{2}})
\end{equation}
and
\begin{equation}\label{N_q(B)}
 N_{q}(B) \, = \, N_q(K-K_{0}) - N_{q}(A).
\end{equation}

Let
\begin{equation}\label{beta_1}
 \beta_{1} \, = \, \frac{N_{q}(A)}{(q-1)^{2}} + \frac{N_{q}(B)}{q^{2}-1} + \frac{N_{q}(J_{2})-2^{2g_{2}}N_{q}(\mathbb{P}^{g_{2}-2})}{q-1}
\end{equation}
and
\begin{equation}\label{beta_2}
 \beta_{2} \, = \, \frac{2^{2g_{2}}}{N_{q}(\text{GL}~(2,\mathbb{F}_{q}))} + \frac{2^{2g_{2}}N_{q}(\mathbb{P}^{g_{2}-1})}{q(q-1)}.
\end{equation}

By [\cite{B-S},Proposition 3.7],
 \begin{equation}\label{N_q(M_2(S))}
  N_{q}(M_{2}^{s}) \, = \, N_{q}(M_{L}(2,1)) + \frac{q^{g_{2}-1}N_{q}(J_{2})}{q^{2}-1} - (\beta_{1} + \beta_{2}) (q-1),
 \end{equation}
 where $M_{L}(2,1)$ is the moduli space of stable bundles of rank $2$ and degree $1$ with determinant $L.$

\begin{lemma}
  \begin{eqnarray}\label{N_q(M_1*M_2(S))}
    N_{q}(\phi^{-1}(M_{1}\times M_{2}^{s})) & = & N_{q}(M_{1})N_{q}(\text{PGL}~(2,\mathbb{F}_q))[N_{q}(M_{L}(2,1)) \nonumber \\
    & &+ \frac{q^{g_{2}-1}N_{q}(J_{2})}{q^{2}-1} - (\beta_{1} + \beta_{2}) (q-1)].
 \end{eqnarray}
\begin{proof}
 Let $([E_{1}],[E_{2}])\in M_{1}\times M_{2}^{s}.$ As $E_{2}$ is stable, $\text{gr}~(E_{2})\,=\, E_{2}.$ Since $E_{1}$ is also stable, $\text{Aut}~(E_{1})$ and $\text{Aut}~(E_{2})$ will just be non-zero scalars.\\
 We have $$\phi^{-1}([E_{1}],[E_{2}]) \,=\, \{[E_{1},E_{2},\overrightarrow{T}]|\overrightarrow{T}: E_{1}(p)\rightarrow E_{2}(p)~\text{is a linear isomorphism}\},$$
 where $[E_{1},E_{2},\overrightarrow{T}]$ is an isomorphism class with the triple $(E_{1},E_{2},\overrightarrow{T})$ as its representative. Now if two triples $(E_{1},E_{2},\overrightarrow{T})$ and $(E_{1},E_{2},\overrightarrow{S})$ are in the same isomorphism class, then they satisfy the following commutative diagram

 $$\xymatrix{
E_{1}(p) \ar@{->}[r]^{\overrightarrow{T}} \ar@{->}[d]^{\lambda} & E_{2}(p) \ar@{->}[d]^{\mu}
\\E_{1}(p)\ar@{->}[r]^{\overrightarrow{S}} & E_{2}(p).
}$$
Hence we have $\overrightarrow{S} \,=\, (\frac{\mu}{\lambda})\overrightarrow{T},$
where $\lambda \in \text{Aut}~(E_{1})$ and $\mu \in \text{Aut}~(E_{2}).$\\
Conversely if two triples $(E_{1},E_{2},\overrightarrow{T})$ and $(E_{1},E_{2},\overrightarrow{S})$ are such that $\overrightarrow{S} = \lambda \overrightarrow{T},$ where $\lambda $ is a non-zero scalar, then again by the following commutative diagram

$$\xymatrix{
E_{1}(p) \ar@{->}[r]^{\overrightarrow{T}} \ar@{->}[d]^{1} & E_{2}(p) \ar@{->}[d]^{\lambda}
\\E_{1}(p)\ar@{->}[r]^{\overrightarrow{S}} & E_{2}(p),
}$$
the triples $(E_{1},E_{2},\overrightarrow{T})$ and $(E_{1},E_{2},\overrightarrow{S})$ are in the same isomorphism class. Hence two triples $(E_{1},E_{2},\overrightarrow{T})$ and $(E_{1},E_{2},\overrightarrow{S})$ are in same isomorphism class if and only if $\overrightarrow{S}$ is a scalar multiple of $\overrightarrow{T}$. Now if we fix a basis for $E_{1}(p)$ and $E_{2}(p)$, then $\overrightarrow{T}$ can be identified with an element of $\text{GL}~(2,\mathbb{F}_q).$ But by the above arguments, the triples $(E_{1},E_{2},\overrightarrow{T})$ and $(E_{1},E_{2},\overrightarrow{\lambda T })$ are in same isomorphism class for every non-zero scalar $\lambda.$ This proves that each distinct isomorphism class of triples in $\phi^{-1}([E_{1}],[E_{2}])$ corresponds to a unique element of $\text{PGL}~(2,\mathbb{F}_q).$
So
 \begin{equation}\label{N_q(M_1*M_2(S))_temporary}
 N_{q}(\phi^{-1}(M_{1}\times M_{2}^{s})) \,=\, N_{q}(M_{1}) N_{q}(M_{2}^{s}) N_{q}(\text{PGL}~(2,\mathbb{F}_q)).
 \end{equation}

Now using equation \eqref{N_q(M_2(S))} in the equation \eqref{N_q(M_1*M_2(S))_temporary}, we get the equation \eqref{N_q(M_1*M_2(S))}.
This proves the Lemma.
\end{proof}
\end{lemma}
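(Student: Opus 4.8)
The plan is to reduce the count to a single fiber of $\phi$ over a generic point of $M_1 \times M_2^s$ and then assemble the total multiplicatively. First I would fix an $\mathbb{F}_q$-rational point $([E_1],[E_2]) \in M_1 \times M_2^s$, so that by the definitions in \S\ref{notation} the bundle $E_1$ is stable on $X_1$ and $E_2$ is stable on $X_2$. Since $M_{12}^0 = M_{12}\setminus N$ is by construction the locus of triples $[E_1,E_2,\overrightarrow{T}]$ for which $\overrightarrow{T}$ has rank $2$ (the rank-$1$ locus being exactly $N$), and $\overrightarrow{T}$ is nonzero by the very definition of $M_{12}$, the fiber $\phi^{-1}([E_1],[E_2])$ consists precisely of isomorphism classes of triples $(E_1,E_2,\overrightarrow{T})$ with $\overrightarrow{T}\colon E_1(p)\to E_2(p)$ a linear \emph{isomorphism}. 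By Lemma \ref{divisor} each such triple is automatically stable, so the fiber genuinely lies in $M_{12}^0$ and the morphism $\phi$ is well defined on it.

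The heart of the argument is to parametrize this fiber. Because $E_1$ and $E_2$ are stable, $\mathrm{Aut}(E_1)$ and $\mathrm{Aut}(E_2)$ consist only of nonzero scalars. Using Definition \ref{definition_of_isomorphism_class_of_triples} I would then establish the key equivalence: two triples $(E_1,E_2,\overrightarrow{T})$ and $(E_1,E_2,\overrightarrow{S})$ with the same underlying bundles are isomorphic \emph{if and only if} $\overrightarrow{S}$ is a nonzero scalar multiple of $\overrightarrow{T}$. Indeed, an isomorphism of triples is a pair $(\lambda,\mu)$ of scalar automorphisms making the defining square commute, which forces $\overrightarrow{S} = (\mu/\lambda)\,\overrightarrow{T}$; conversely any rescaling $\overrightarrow{S} = \lambda\,\overrightarrow{T}$ is realized by the pair $(1,\lambda)$. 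After fixing bases of $E_1(p)$ and $E_2(p)$, the invertible maps $\overrightarrow{T}$ are identified with $\mathrm{GL}(2,\mathbb{F}_q)$, and passing to the scalar quotient identifies the isomorphism classes in the fiber with $\mathrm{PGL}(2,\mathbb{F}_q)$. Thus each $\mathbb{F}_q$-point of $M_1 \times M_2^s$ contributes exactly $N_q(\mathrm{PGL}(2,\mathbb{F}_q))$ points.

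With the constant fiber cardinality in hand, the multiplicativity of $\mathbb{F}_q$-point counts gives
\[
N_q\bigl(\phi^{-1}(M_1 \times M_2^s)\bigr) = N_q(M_1)\, N_q(M_2^s)\, N_q(\mathrm{PGL}(2,\mathbb{F}_q)),
\]
and substituting the value of $N_q(M_2^s)$ recorded in \eqref{N_q(M_2(S))} into this product yields the asserted formula \eqref{N_q(M_1*M_2(S))}.

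The step I expect to require the most care is the rationality bookkeeping underlying the fiber identification: one must check that an $\mathbb{F}_q$-rational base point $([E_1],[E_2])$ together with an $\mathbb{F}_q$-rational class in $\mathrm{PGL}(2,\mathbb{F}_q)$ assembles into an honest $\mathbb{F}_q$-point of $M_{12}^0$, with no arithmetic twisting and with every fiber point arising this way. This is exactly where stability is essential: the automorphism group scheme of each $E_i$ is $\mathbb{G}_m$, whose Galois cohomology over a finite field is trivial by Hilbert 90, so the quotient of $\mathrm{GL}(2,\mathbb{F}_q)$ by scalars computes $\mathrm{PGL}(2,\mathbb{F}_q)$ with no correction term. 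The fineness of $M_{12}$ as a moduli space then legitimizes summing the fiberwise count over the base, completing the proof.
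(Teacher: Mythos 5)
Your proposal is correct and follows essentially the same route as the paper: identify the fiber over a stable pair $([E_1],[E_2])$ with isomorphism classes of triples $(E_1,E_2,\overrightarrow{T})$, show via the scalar-automorphism diagram argument that two such triples are isomorphic iff the maps differ by a scalar, conclude the fiber is $\mathrm{PGL}(2,\mathbb{F}_q)$, and multiply by $N_q(M_1)N_q(M_2^s)$ before substituting \eqref{N_q(M_2(S))}. Your closing remark on Hilbert 90 makes explicit a rationality point the paper leaves implicit, but it is a refinement of, not a departure from, the same argument.
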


\subsection{$\mathbf{Computation~~of~~N_{q}(\phi^{-1}(M_{1} \times (K-K_{0})))}$}

Let $([E_{1}], [L \oplus L^{-1}]) \in M_{1} \times (K-K_{0}).$ Our aim is to find out the fiber $\phi^{-1}([E_{1}],[L \oplus L^{-1}]).$ Suppose $E_{2}$ is a vector bundle on $X_{2}$ defined over $\mathbb{F}_{q}$ such that $\text{gr}~(E_{2}) \,\simeq\, L \oplus L^{-1}.$ Every such $E_{2}$ can be given as an extension
\begin{equation}\label{Equation 3.4}
 0\rightarrow L \rightarrow E_{2} \rightarrow L^{-1} \rightarrow 0,
\end{equation}
 where the extension \eqref{Equation 3.4} and the line bundles $L$ and $L^{-1}$ are defined over $\overline{\mathbb{F}}_{q}$  (which is same as $\overline{\mathbb{F}}_{l}$).\\

We know that $(K-K_0) \, = \, A \sqcup B,$ where $A$ and $B$ are as defined in the equations \eqref{A} and \eqref{B} respectively. Therefore
\begin{equation}\label{N_q(M_1*(K-K_0))_temporary}
 N_q(\phi^{-1}(M_1 \times K-K_0)) \,=\, N_q(\phi^{-1}(M_1 \times A)) + N_q(\phi^{-1}(M_1 \times B)).
\end{equation}
We now compute the expressions for $N_q(\phi^{-1}(M_1 \times A))$ and $N_q(\phi^{-1}(M_1 \times B))$ separately.

\begin{lemma}\label{N_q(M_1*A)}
 \begin{equation*}
  N_q(\phi^{-1}(M_1 \times A)) \,= \, N_{q}(M_{1}) [N_{q}(A)N_{q}(\frac{\text{GL}~(2,\mathbb{F}_{q})}{\mathbb{G}_{m} \times \mathbb{G}_{m}}) + 2N_{q}(A)N_{q}(\mathbb{P}^{g_{2}-2})N_{q}(\text{PGL}~(2,\mathbb{F}_{q})).
 \end{equation*}
\begin{proof}
 Suppose $E_{2}$ is a vector bundle on $X_2$ defined over $\mathbb{F}_{q}$ such that $\text{gr}~(E_2)\,\simeq\, L \oplus L^{-1} \in A.$ Then the extension \eqref{Equation 3.4} that corresponds to $E_{2}$ either splits or does not split. We now analyze the fibers in both the cases separately.

\textbf{Case~(i):}
 If the extension \eqref{Equation 3.4} splits, then $E_{2} \,\simeq\, L \oplus L^{-1}.$ In this case $\text{Aut}~(E_{2}) \,\simeq\, \mathbb{G}_{m} \times \mathbb{G}_{m}$ \cite[Lemma 3.3(b)]{B-S}. Now if $(E_{1},E_{2},\overrightarrow{T})$ and $(E_{1},E_{2},\overrightarrow{S})$ belong to the same isomorphism class, then by the following commutative diagram
 $$\xymatrix{
  E_{1}(p) \ar@{->}[r]^{\overrightarrow{T}} \ar@{->}[d]^{\lambda} & E_{2}(p) \ar@{->}[d]^{\psi}
  \\E_{1}(p)\ar@{->}[r]^{\overrightarrow{S}} & E_{2}(p)
}$$
we have $$\overrightarrow{S} \,=\, (\frac{\psi}{\lambda})\overrightarrow{T},$$
where $\lambda \in \text{Aut}~(E_{1})$ and $\psi \in \text{Aut}~(E_{2}).$ Conversely if two triples $(E_{1},E_{2},\overrightarrow{T})$ and $(E_{1},E_{2},\overrightarrow{S})$ are such that $\overrightarrow{S} \,=\, \gamma \overrightarrow{T},$ where $\gamma \in \mathbb{G}_{m} \times \mathbb{G}_{m},$ then one can easily see that they both belong to the same isomorphism class. Hence two triples $(E_{1},E_{2},\overrightarrow{T})$ and $(E_{1},E_{2},\overrightarrow{S})$ are in the same isomorphism class if an only if $\overrightarrow{S}$ is a $\mathbb{G}_{m} \times \mathbb{G}_{m}$ multiple of $\overrightarrow{T},$ i.e., if and only if both $\overrightarrow{S}$ and $\overrightarrow{T}$ represent the same coset of $\frac{\text{GL}~(2,\mathbb{F}_{q})}{\mathbb{G}_{m} \times \mathbb{G}_{m}}.$  So for a fixed $L,$ each distinct isomorphism class of triples
$[E_{1}, L \oplus L^{-1},\overrightarrow{T}]$ corresponds to a unique element in $\frac{\text{GL}~(2,\mathbb{F}_{q})}{\mathbb{G}_{m} \times \mathbb{G}_{m}}.$ Hence \textbf{Case~(i)} contributes $N_{q}(M_{1})N_{q}(A)N_{q}(\frac{\text{GL}~(2,\mathbb{F}_{q})}{\mathbb{G}_{m} \times \mathbb{G}_{m}})$ to $N_{q}(\phi^{-1}(M_{1} \times A)).$\\

\textbf{Case~(ii) :}
If the extension \eqref{Equation 3.4} does not split, then $\text{Aut}~(E_{2}) \,\simeq\, \mathbb{G}_{m}$ \cite[Lemma 3.3(a)]{B-S}. For each fixed isomorphism class of bundles $E_{2}$, repeating the arguments done for the fiber over the stable locus $M_{1} \times M_{2}^{s},$ one can conclude that the set of distinct isomorphism classes of triples of the form $[E_{1},E_{2},\overrightarrow{T}]$ is in bijection with $\text{PGL}~(2,\mathbb{F}_{q}).$ But then each distinct isomorphism class of bundles $E_{2}$ corresponds to an isomorphism class of non-split extensions of type \eqref{Equation 3.4} and each such isomorphism class of non-split extensions corresponds to a unique element of $(\text{Ext}^{1}(L^{-1},L) \backslash \{0\})$ mod $\mathbb{G}_{m}.$ It is well known that $\text{Ext}^{1}(L^{-1},L)$ mod $\mathbb{G}_{m}$ is same as $\mathbb{P}^{g_{2}-2}.$ Since the extensions of $L$ by $L^{-1}$ and the extensions of $L^{-1}$ by $L$ give rise to distinct isomorphism classes of bundles, \textbf{Case~(ii)} contributes $N_{q}(M_{1})2N_{q}(A)N_{q}(\mathbb{P}^{g_{2}-2})N_{q}(\text{PGL}~(2,\mathbb{F}_{q}))$ to $N_{q}(\phi^{-1}(M_{1} \times A)).$

Combining both the cases we get
\begin{equation*}
  N_q(\phi^{-1}(M_1 \times A)) \,= \, N_{q}(M_{1}) [N_{q}(A)N_{q}(\frac{\text{GL}~(2,\mathbb{F}_{q})}{\mathbb{G}_{m} \times \mathbb{G}_{m}}) + 2N_{q}(A)N_{q}(\mathbb{P}^{g_{2}-2})N_{q}(\text{PGL}~(2,\mathbb{F}_{q})).
 \end{equation*}
which proves the Lemma.
\end{proof}
\end{lemma}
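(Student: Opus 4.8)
The plan is to compute $N_q(\phi^{-1}(M_1 \times A))$ by decomposing the fiber over a point $([E_1],[L\oplus L^{-1}]) \in M_1 \times A$ according to whether the defining extension $0 \to L \to E_2 \to L^{-1} \to 0$ splits or not, and then summing the fiber cardinalities over all points of $M_1 \times A$. Since $E_1$ is stable its only automorphisms are nonzero scalars, so the factor $N_q(M_1)$ will pull out cleanly in every case, exactly as in the stable-stable computation of the previous lemma. The essential point throughout is the reasoning already established there: two triples $(E_1,E_2,\overrightarrow{T})$ and $(E_1,E_2,\overrightarrow{S})$ lie in the same isomorphism class if and only if $\overrightarrow{S} = (\psi/\lambda)\overrightarrow{T}$ for some $\lambda \in \mathrm{Aut}(E_1)$ and $\psi \in \mathrm{Aut}(E_2)$, so the fiber over a fixed pair of bundles is the set of nonzero linear maps $E_1(p) \to E_2(p)$ modulo the induced action of $\mathrm{Aut}(E_1) \times \mathrm{Aut}(E_2)$.

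\textbf{The split case.} First I would treat the case where the extension splits, so $E_2 \simeq L \oplus L^{-1}$. Here the key input is that $\mathrm{Aut}(E_2) \simeq \mathbb{G}_m \times \mathbb{G}_m$ (this is \cite[Lemma 3.3(b)]{B-S}). Fixing bases, a nonzero $\overrightarrow{T}$ is identified with an element of $\mathrm{GL}(2,\mathbb{F}_q)$, and the scalar from $\mathrm{Aut}(E_1)$ together with the $\mathbb{G}_m \times \mathbb{G}_m$ from $\mathrm{Aut}(E_2)$ acts; since the $\mathrm{Aut}(E_1)$-scalar is absorbed into the diagonal of $\mathbb{G}_m \times \mathbb{G}_m$, the orbits are precisely the cosets of $\mathrm{GL}(2,\mathbb{F}_q)/(\mathbb{G}_m \times \mathbb{G}_m)$. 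As $L$ ranges over the points contributing to $A$, this yields the first summand $N_q(M_1)N_q(A)N_q\!\left(\frac{\mathrm{GL}(2,\mathbb{F}_q)}{\mathbb{G}_m \times \mathbb{G}_m}\right)$.

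\textbf{The non-split case.} Next I would handle the non-split extensions, where $\mathrm{Aut}(E_2) \simeq \mathbb{G}_m$ (this is \cite[Lemma 3.3(a)]{B-S}). Because only scalar automorphisms now act on both sides, the fiber-counting argument is \emph{verbatim} the one from the stable locus: the distinct isomorphism classes of triples over a fixed $E_2$ are in bijection with $\mathrm{PGL}(2,\mathbb{F}_q)$. The remaining bookkeeping is to count how many isomorphism classes of non-split $E_2$ arise with $\mathrm{gr}(E_2) \simeq L \oplus L^{-1}$. Each such class corresponds to a nonzero element of $\mathrm{Ext}^1(L^{-1},L)$ modulo the $\mathbb{G}_m$-scaling, and it is standard that this quotient is $\mathbb{P}^{g_2-2}$ (by Riemann--Roch $\dim \mathrm{Ext}^1(L^{-1},L) = g_2 - 1$ for $L^2 \not\simeq \mathcal{O}$, so the projectivization is $\mathbb{P}^{g_2-2}$). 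The factor of $2$ accounts for the fact that extensions of $L^{-1}$ by $L$ and of $L$ by $L^{-1}$ give non-isomorphic bundles, both with the same associated graded $L \oplus L^{-1}$. This produces the second summand $2\,N_q(M_1)N_q(A)N_q(\mathbb{P}^{g_2-2})N_q(\mathrm{PGL}(2,\mathbb{F}_q))$, and adding the two cases gives the claimed formula.

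\textbf{Main obstacle.} The routine part is the orbit-counting, which simply reuses the previous lemma's argument; the one place demanding genuine care is the non-split case, where one must justify both the identification of non-split bundles with $\mathbb{P}^{g_2-2}$ and, crucially, the factor of $2$. The subtlety is to verify that for $[L \oplus L^{-1}] \in A$ (so $L^2 \not\simeq \mathcal{O}$, i.e. $L \not\simeq L^{-1}$) the two extension directions really do yield distinct isomorphism classes of bundles rather than being conflated, and that no further overcounting occurs when passing from bundles $E_2$ to triples. This is where I expect the argument to need the hypothesis $L^2 \not\simeq \mathcal{O}_{X_2}$ most essentially.
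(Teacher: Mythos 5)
Your proposal is correct and follows essentially the same route as the paper's own proof: the identical split/non-split dichotomy, the same automorphism-group inputs $\mathrm{Aut}(E_2)\simeq\mathbb{G}_m\times\mathbb{G}_m$ (resp.\ $\mathbb{G}_m$) from \cite[Lemma 3.3]{B-S}, the same orbit counts yielding $\frac{\text{GL}~(2,\mathbb{F}_q)}{\mathbb{G}_m\times\mathbb{G}_m}$ (resp.\ $\text{PGL}~(2,\mathbb{F}_q)$), the same identification of non-split bundles with $\mathbb{P}^{g_2-2}$, and the same factor of $2$ for the two extension directions. Your Riemann--Roch computation of $\dim\mathrm{Ext}^{1}(L^{-1},L)=g_2-1$ merely makes explicit what the paper cites as well known, so there is nothing further to reconcile.
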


\begin{lemma}\label{N_q(M_1*B)}
 \begin{equation*}
  N_q(\phi^{-1}(M_1 \times B)) \, = \, N_q(M_1)N_{q}(B)\frac{N_{q}(\text{GL}~(2,\mathbb{F}_{q}))}{q^{2}-1}.
 \end{equation*}

\begin{proof}
Suppose $E_{2}$ is a vector bundle on $X_2$ defined over $\mathbb{F}_{q}$ such that $\text{gr}~(E_2)\, \simeq\,L \oplus L^{-1} \in B.$ Then from the arguments in \cite[Remark 3.2]{B-S}, the corresponding extension \eqref{Equation 3.4} splits over $\mathbb{F}_{q^{2}}$ (i.e., $L$ and $L^{-1}$ are defined over $\mathbb{F}_{q^{2}}$ and $E_2 \, \simeq \, L \oplus L^{-1}$ over $\mathbb{F}_{q^{2}}$). By the following commutative diagram
$$\xymatrix{
E_{1}(p) \ar@{->}[r]^{\overrightarrow{T}} \ar@{->}[d]^{\lambda} & E_{2}(p) \ar@{->}[d]^{\tau}
\\E_{1}(p)\ar@{->}[r]^{\overrightarrow{S}} & E_{2}(p)
}$$ we have
$$\overrightarrow{S}\, = \, (\frac{\tau}{\lambda})\overrightarrow{T},$$ where $\lambda \in \text{Aut}~(E_{1})$ and $\tau \in \text{Aut}~(E_{2}).$ Since the group $\text{Aut}~(E_2)$ acts on $\mathcal{L}(E_1(p),E_2(p))$ (where $\mathcal{L}(E_1(p),E_2(p))$ denotes the set of all linear isomorphisms from $E_1(p)$ to $E_2(p)$) by composition, we can conclude that two triples $(E_1,E_2,\overrightarrow{T})$ and $(E_1,E_2,\overrightarrow{S})$ are in the same isomorphism class if and only if $\overrightarrow{T}$ and $\overrightarrow{S}$ are in the same orbit under this action. So it is clear that
the set of distinct isomorphism classes of triples will be in bijection with the set of all orbits under this action which can be identified with $\frac{\text{GL}~(2,\mathbb{F}_q)}{\text{Aut}~(E_2)}.$ So the fibers over $M_{1}\times B$ contribute $N_{q}(M_{1})N_{q}(B)N_{q}(\frac{\text{GL}~(2,\mathbb{F}_{q})}{\text{Aut}~(E_2)})$ to $N_{q}(\phi^{-1}(M_{1} \times B)).$ By \cite[Lemma 3.5]{B-S}, we know that  $N_q(\text{Aut}~(E_{2})) \, = \, N_{q^{2}} (\mathbb{G}_{m}) \, = \, q^{2}-1.$
Therefore we have
\begin{equation*}
  N_q(\phi^{-1}(M_1 \times B)) \, = \, N_q(M_1)N_{q}(B)\frac{N_{q}(\text{GL}~(2,\mathbb{F}_{q}))}{q^{2}-1}.
 \end{equation*}
\end{proof}
\end{lemma}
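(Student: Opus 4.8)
The plan is to compute the contribution of a single point of $M_1\times B$ to the fibre of $\phi$ and then sum over all such points. So first I would fix a point $([E_1],[L\oplus L^{-1}])\in M_1\times B$ and carry out the rationality analysis. Since $[L\oplus L^{-1}]\in B$, the line bundle $L$ is \emph{not} defined over $\mathbb{F}_q$; by \cite[Remark 3.2]{B-S} the bundles $L$ and $L^{-1}$ are defined over $\mathbb{F}_{q^2}$, the Frobenius of $\mathbb{F}_{q^2}/\mathbb{F}_q$ interchanges them, and the extension \eqref{Equation 3.4} splits over $\mathbb{F}_{q^2}$. Thus there is a single $\mathbb{F}_q$-isomorphism class of bundle $E_2$ with $\text{gr}(E_2)\simeq L\oplus L^{-1}$, namely the $\mathbb{F}_q$-descent of $L\oplus L^{-1}$ along this Galois action; it is indecomposable over $\mathbb{F}_q$ yet becomes $L\oplus L^{-1}$ over $\mathbb{F}_{q^2}$. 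I would record here the key numerical input $N_q(\text{Aut}~(E_2))=N_{q^2}(\mathbb{G}_m)=q^2-1$ furnished by \cite[Lemma 3.5]{B-S}.

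Next, exactly as in the stable case treated in Lemma \ref{N_q(M_1*A)}, I would identify the fibre $\phi^{-1}([E_1],[L\oplus L^{-1}])$ with a set of orbits. Two triples $(E_1,E_2,\overrightarrow{T})$ and $(E_1,E_2,\overrightarrow{S})$ lie in the same isomorphism class precisely when there exist $\lambda\in\text{Aut}~(E_1)$ and $\tau\in\text{Aut}~(E_2)$ making the evident square commute, i.e. $\overrightarrow{S}=(\tau/\lambda)\overrightarrow{T}$. Since $E_1$ is stable, $\text{Aut}~(E_1)$ consists only of scalars, which are already contained in $\text{Aut}~(E_2)$, so the classification is governed solely by the action of $\text{Aut}~(E_2)$ on the set $\mathcal{L}(E_1(p),E_2(p))$ of linear isomorphisms by composition. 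Over $M_{12}^{0}$ the map $\overrightarrow{T}$ is an isomorphism, hence after choosing bases of $E_1(p)$ and $E_2(p)$ the distinct isomorphism classes of triples in the fibre are in bijection with the orbit space $\text{GL}~(2,\mathbb{F}_q)/\text{Aut}~(E_2)$.

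Finally I would count. The left translation action of the subgroup $\text{Aut}~(E_2)(\mathbb{F}_q)\cong\mathbb{F}_{q^2}^{*}$ on $\text{GL}~(2,\mathbb{F}_q)$ is free, so the number of orbits equals $N_q(\text{GL}~(2,\mathbb{F}_q))/N_q(\text{Aut}~(E_2))=N_q(\text{GL}~(2,\mathbb{F}_q))/(q^2-1)$. Summing this fibre count over the $N_q(M_1)\,N_q(B)$ choices of $([E_1],[L\oplus L^{-1}])$ in $M_1\times B$ then yields the claimed formula.

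The main obstacle is not the orbit bookkeeping but the rationality analysis of the first step: one must be certain that each $\mathbb{F}_q$-point of $B$ carries exactly one $\mathbb{F}_q$-form $E_2$, and that its automorphism group acquires the \emph{nonsplit} torus structure giving $q^2-1$ points rather than the $(q-1)^2$ or $q(q-1)$ occurring in the split and nonsplit $\mathbb{F}_q$-rational subcases of $A$. This is precisely where the descent statement of \cite[Remark 3.2]{B-S} and the automorphism count of \cite[Lemma 3.5]{B-S} do the real work; granting these inputs, the remaining steps reduce to the same orbit-counting argument already used over the stable locus.
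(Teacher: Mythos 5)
Your proposal is correct and follows essentially the same route as the paper: both invoke \cite[Remark 3.2]{B-S} to split $E_2$ over $\mathbb{F}_{q^2}$, identify the fibre with the orbit space $\text{GL}(2,\mathbb{F}_q)/\text{Aut}(E_2)$ under composition, and conclude via the count $N_q(\text{Aut}(E_2)) = N_{q^2}(\mathbb{G}_m) = q^2-1$ from \cite[Lemma 3.5]{B-S}. Your write-up is in fact slightly more careful than the paper's, since you make explicit two points the paper leaves implicit --- that each $\mathbb{F}_q$-point of $B$ carries a unique $\mathbb{F}_q$-form of $E_2$ (by Galois descent), and that the orbit count equals the ratio of cardinalities because the translation action of $\text{Aut}(E_2)$ is free.
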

This completes the Lemma.

\begin{lemma}
\begin{eqnarray}\label{Expression_for_N_q(K-K0)}
  N_{q}(\phi^{-1}(M_{1} \times (K-K_{0}))) &=& N_{q}(M_{1})[N_{q}(\tilde{K})(q^{2}-q)+ N_{q}(J_{2})[q + q(q^2-1)N_{q}(\mathbb{P}^{g_{2}-2})] \nonumber \\
  & & - 2^{2g_{2}}[q+ q(q^{2}-1)N_{q}(\mathbb{P}^{g_{2}-2})+ (q^{2}-q)N_{q}(\mathbb{P}^{g_{2}-1})]],
\end{eqnarray}
where $\tilde{K}$ is the canonical desingularization of $K$.
\begin{proof}
 Using the Lemmas \ref{N_q(M_1*A)} and \ref{N_q(M_1*B)} in the equation \eqref{N_q(M_1*(K-K_0))_temporary}, we get
 \begin{eqnarray}\label{N_q(M_1*(K-K_0))_bigexpression}
  N_{q}(\phi^{-1}(M_{1} \times (K-K_{0}))) &=& N_{q}(M_{1}) [N_{q}(A)N_{q}(\frac{\text{GL}~(2,\mathbb{F}_{q})}{\mathbb{G}_{m} \times \mathbb{G}_{m}}) + 2N_{q}(A)N_{q}(\mathbb{P}^{g_{2}-2})N_{q}(\text{PGL}~(2,\mathbb{F}_{q})) \nonumber \\
  & &+ N_{q}(B)\frac{N_{q}(\text{GL}~(2,\mathbb{F}_{q}))}{q^{2}-1}] \nonumber \\
  &=& N_{q}(M_{1})[N_{q}(A)q(q+1) + 2N_{q}(A)N_{q}(\mathbb{P}^{g_{2}-2})q(q^{2}-1) \nonumber \\
  & &+N_{q}(B)(q^{2}-q)].
\end{eqnarray}
 By \cite[Remark 4.4]{B-S}, it is known that
 \begin{equation}\label{N_q(K-K_0)}
  N_{q}(K-K_{0}) \, = \, N_{q}(\tilde{K}) - 2^{2g_{2}} N_{q}(\mathbb{P}^{g_{2}-1}).
 \end{equation}
(One can see \cite{SP} for more details on $K$ and $\tilde{K}$).
Now using \eqref{N_q(A)}, \eqref{N_q(B)} and \eqref{N_q(K-K_0)} in the equation \eqref{N_q(M_1*(K-K_0))_bigexpression}, we get the equation \eqref{Expression_for_N_q(K-K0)}. This completes the Lemma.
\end{proof}
\end{lemma}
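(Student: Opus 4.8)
The plan is to assemble the result from the two preceding fibre counts via the disjoint decomposition $(K-K_{0})=A\sqcup B$. First I would invoke equation \eqref{N_q(M_1*(K-K_0))_temporary}, which splits $N_{q}(\phi^{-1}(M_{1}\times(K-K_{0})))$ as $N_{q}(\phi^{-1}(M_{1}\times A))+N_{q}(\phi^{-1}(M_{1}\times B))$, and substitute the closed forms furnished by Lemma \ref{N_q(M_1*A)} and Lemma \ref{N_q(M_1*B)}. At this stage the expression involves the orders of $\text{GL}~(2,\mathbb{F}_{q})/(\mathbb{G}_{m}\times\mathbb{G}_{m})$ and $\text{PGL}~(2,\mathbb{F}_{q})$ together with the ratio $N_{q}(\text{GL}~(2,\mathbb{F}_{q}))/(q^{2}-1)$ occurring in Lemma \ref{N_q(M_1*B)}, which I would evaluate from $|\text{GL}~(2,\mathbb{F}_{q})|=(q^{2}-1)(q^{2}-q)$. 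These give $N_{q}(\text{GL}~(2,\mathbb{F}_{q})/(\mathbb{G}_{m}\times\mathbb{G}_{m}))=q(q+1)$, $N_{q}(\text{PGL}~(2,\mathbb{F}_{q}))=q(q^{2}-1)$, and $N_{q}(\text{GL}~(2,\mathbb{F}_{q}))/(q^{2}-1)=q^{2}-q$, producing the intermediate formula \eqref{N_q(M_1*(K-K_0))_bigexpression}.

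The second half of the argument is to eliminate $N_{q}(A)$ and $N_{q}(B)$ in favour of the geometrically meaningful quantities $N_{q}(\tilde{K})$, $N_{q}(J_{2})$ and the point-counts of projective spaces. Here I would use $N_{q}(B)=N_{q}(K-K_{0})-N_{q}(A)$ from \eqref{N_q(B)} to replace every occurrence of $N_{q}(B)$, and then collect the coefficient of $N_{q}(A)$. The key cancellation is $q(q+1)-(q^{2}-q)=2q$, so after grouping the $A$-terms acquire the common factor $2N_{q}(A)$, which by \eqref{N_q(A)} equals $N_{q}(J_{2})-2^{2g_{2}}$. Substituting finally $N_{q}(K-K_{0})=N_{q}(\tilde{K})-2^{2g_{2}}N_{q}(\mathbb{P}^{g_{2}-1})$ from \eqref{N_q(K-K_0)} and distributing over the two factors $N_{q}(J_{2})-2^{2g_{2}}$ and $N_{q}(\tilde{K})-2^{2g_{2}}N_{q}(\mathbb{P}^{g_{2}-1})$ regroups everything into the three advertised blocks: the $N_{q}(\tilde{K})(q^{2}-q)$ term, the $N_{q}(J_{2})$-block, and the $2^{2g_{2}}$-block, which is exactly \eqref{Expression_for_N_q(K-K0)}.

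There is no conceptual obstacle here: the content has already been extracted in the fibre-counting Lemmas \ref{N_q(M_1*A)} and \ref{N_q(M_1*B)}, and what remains is careful algebraic bookkeeping. The only delicate point is the collection step, where one must correctly track the coefficient of $N_{q}(A)$ through the substitution $N_{q}(B)=N_{q}(K-K_{0})-N_{q}(A)$; a sign or factor slip there is the most likely source of error. A term-by-term check against \eqref{Expression_for_N_q(K-K0)} — matching the $(q^{2}-q)$ coefficients of $N_{q}(\tilde{K})$ and of $2^{2g_{2}}N_{q}(\mathbb{P}^{g_{2}-1})$, and confirming that the shared bracket $q+q(q^{2}-1)N_{q}(\mathbb{P}^{g_{2}-2})$ multiplies both $N_{q}(J_{2})$ and $2^{2g_{2}}$ — furnishes the required consistency verification.
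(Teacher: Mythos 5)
Your proposal is correct and follows essentially the same route as the paper: substitute Lemmas \ref{N_q(M_1*A)} and \ref{N_q(M_1*B)} into the decomposition \eqref{N_q(M_1*(K-K_0))_temporary}, evaluate the group-quotient orders to get \eqref{N_q(M_1*(K-K_0))_bigexpression}, and then eliminate $N_{q}(A)$ and $N_{q}(B)$ via \eqref{N_q(A)}, \eqref{N_q(B)} and \eqref{N_q(K-K_0)}. Your explicit bookkeeping (the cancellation $q(q+1)-(q^{2}-q)=2q$ and the identification $2N_{q}(A)=N_{q}(J_{2})-2^{2g_{2}}$) is exactly the algebra the paper leaves implicit, and it checks out term by term against \eqref{Expression_for_N_q(K-K0)}.
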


\subsection{$\mathbf{Computation~~of~~N_{q}(\phi^{-1}(M_{1} \times K_{0}))}$}

\begin{lemma}
 \begin{equation*}
N_{q}(\phi^{-1}(M_{1} \times K_{0})) \, = \, N_{q}(M_{1})2^{2g_{2}}[1 +(q^{2}-1)N_{q}(\mathbb{P}^{g_{2}-1})].
\end{equation*}
 \begin{proof}
Let $([E_{1}],[L\oplus L]) \in M_{1} \times K_{0}.$ Suppose $E_{2}$ is a vector bundle over $X_{2}$ defined over $\mathbb{F}_{q}$ such that $\text{gr}~(E_{2}) \, \simeq\, L \oplus L.$ Again every such $E_{2}$ can be given as an extension
\begin{equation}\label{Equation 3.8}
 0\rightarrow L \rightarrow E_{2} \rightarrow L \rightarrow 0,
\end{equation}
with $L^{2} \,\simeq\, \mathcal{O}_{X_{2}}.$ Such line bundles are $2^{2g_{2}}$ in number and so, without loss of generality we can assume that $L$ and the extensions of type \eqref{Equation 3.8} are defined over $\mathbb{F}_{q}$.
We again consider two cases, the case when \eqref{Equation 3.8} splits and when it does not split.

\textbf{Case~(i):} If the extension \eqref{Equation 3.8} splits, then $E_{2} \, \simeq \, L \oplus L.$ Also in this case $\text{Aut}~(E_{2}) \, \simeq \, \text{GL}~(2,\mathbb{F}_q)$ \cite[Proposition 3.1(1)]{B-S}.  We claim that for a fixed $L,$ (fixing a basis for $E_{1}(p)$ and $E_{2}(p)$) there is only one isomorphism class of triples namely $[E_{1}, L \oplus L ,\overrightarrow{I_{2}}],$ where $I_{2}$ stands for $2 \times 2$ identity matrix, i.e., $I_{2}$  corresponds to the invertible linear transformation $\overrightarrow{I_{2}}$ which sends the fixed basis of $E_{1}(p)$ to the fixed basis of $E_{2}(p).$

 Suppose $[E_{1}, L \oplus L ,\overrightarrow{T}]$ is an isomorphism class of triples . Fix a basis $\{ v_{1},v_{2}\} $ of $E_{1}(p)$ and a basis $\{ w_{1},w_{2}\}$ of $E_{2}(p).$ Once the bases are fixed, $\overrightarrow{T}$ can be identified with an element of $\text{GL}~(2,\mathbb{F}_q),$ call it $T.$ Consider the following commutative diagram-
$$\xymatrix{
E_{1}(p) \ar@{->}[r]^{T} \ar@{->}[d]^{1} & E_{2}(p) \ar@{->}[d]^{T^{-1}}
\\E_{1}(p)\ar@{->}[r]^{I_{2}} & E_{2}(p).
}$$
Here 1 denotes the identity element of $\text{Aut}~(E_{1}),$  and $T^{-1} \in \text{Aut}~(E_{2})$ is the inverse of the matrix $T.$ From the above diagram, we have $[E_{1},L \oplus L,\overrightarrow{T}] \, = \, [E_{1},L \oplus L,\overrightarrow{I_{2}}].$ So there is only one isomorphism class of triples once $L$ is fixed. Since there are $2^{2g_{2}}$ line bundles of this type, \textbf{Case~(i)} contributes $N_{q}(M_{1})2^{2g_{2}}$ to $N_{q}(\phi^{-1}(M_{1} \times K_{0})).$

\textbf{Case~(ii):} Suppose the extension \eqref{Equation 3.8} does not split. Then $\text{Aut}~(E_{2}) \, \simeq \, \mathbb{G}_{m} \times \mathbb{G}_{a}$ \cite[Proposition 3.1(2)]{B-S}. As earlier, each distinct isomorphism class of bundles $E_{2}$ corresponds to an isomorphism class of non-split extensions of type \eqref{Equation 3.8} and each non-split extension class corresponds to a unique element of $(\text{Ext}^{1}(L,L)\backslash \{0\})$ mod $\mathbb{G}_{m}$ which is nothing but $\mathbb{P}^{g_{2}-1}.$ So, for a fixed $L$ such that $L \oplus L \in K_{0},$ each distinct isomorphism class of triples $[E_{1},E_{2},\overrightarrow{T}]$ (where $E_{2}$ is given by an isomorphism class of non-split extensions of type \eqref{Equation 3.8}), corresponds to a unique element of $\mathbb{P}^{g_{2}-1} \times \frac{\text{GL}~(2,\mathbb{F}_q)}{\mathbb{G}_{m} \times \mathbb{G}_{a}}.$ Finally since $K_{0}$ has $2^{2g_{2}}$ elements, \textbf{Case~(ii)} contributes $2^{2g_{2}}N_{q}(M_{1})N_{q}(\mathbb{P}^{g_{2}-1})N_{q}(\frac{\text{GL}~(2,\mathbb{F}_q)}{\mathbb{G}_{m} \times \mathbb{G}_{a}})$ to $N_{q}(\phi^{-1}(M_{1} \times K_{0})).$ So we have

\begin{eqnarray*}
N_{q}(\phi^{-1}(M_{1} \times K_{0}))\, = \, N_{q}(M_{1})2^{2g_{2}}[1 +N_{q}(\frac{\text{GL}~(2,\mathbb{F}_q)}{\mathbb{G}_{m}\times\mathbb{G}_{a}})N_{q}(\mathbb{P}^{g_{2}-1})].
\end{eqnarray*}

This implies
\begin{equation}\label{Expression_for_N_q(K_0)}
N_{q}(\phi^{-1}(M_{1} \times K_{0})) \, = \, N_{q}(M_{1})2^{2g_{2}}[1 +(q^{2}-1)N_{q}(\mathbb{P}^{g_{2}-1})].
\end{equation}
\end{proof}
\end{lemma}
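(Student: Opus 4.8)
The plan is to imitate the fibrewise analysis already carried out for the strata $M_1 \times M_2^s$ and $M_1 \times (K-K_0)$. Fix a point $([E_1],[L\oplus L]) \in M_1 \times K_0$, so that $L^2 \simeq \mathcal{O}_{X_2}$. Any $E_2$ on $X_2$ with $\text{gr}~(E_2) \simeq L \oplus L$ arises as an extension
$$0 \rightarrow L \rightarrow E_2 \rightarrow L \rightarrow 0,$$
and, since there are only $2^{2g_2}$ such line bundles $L$, we may assume $L$ and all these extensions are defined over $\mathbb{F}_q$. As in the earlier lemmas, over a fixed pair $(E_1,E_2)$ the isomorphism classes of triples $[E_1,E_2,\overrightarrow{T}]$ with $\overrightarrow{T}$ invertible are the orbits of the set of linear isomorphisms $E_1(p) \rightarrow E_2(p)$ under the composition action of $\text{Aut}~(E_1) \times \text{Aut}~(E_2)$; because $E_1$ is stable, $\text{Aut}~(E_1)$ is just the scalars and is absorbed by the scalars in $\text{Aut}~(E_2)$, so these orbits are counted by $N_q(\text{GL}~(2,\mathbb{F}_q)/\text{Aut}~(E_2))$. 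The whole computation thus reduces to identifying $\text{Aut}~(E_2)$ in the split and non-split cases.

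First I would dispose of the split case $E_2 \simeq L \oplus L$, where $\text{Aut}~(E_2) \simeq \text{GL}~(2,\mathbb{F}_q)$ by \cite[Proposition 3.1(1)]{B-S}. Then $\text{GL}~(2,\mathbb{F}_q)/\text{Aut}~(E_2)$ is a single point: writing $\overrightarrow{T}$ as a matrix $T$ and taking the vertical maps $1 \in \text{Aut}~(E_1)$ and $T^{-1} \in \text{Aut}~(E_2)$ exhibits $[E_1,L\oplus L,\overrightarrow{T}] = [E_1,L\oplus L,\overrightarrow{I_2}]$. Hence each of the $2^{2g_2}$ admissible $L$ gives exactly one class, contributing $N_q(M_1)2^{2g_2}$.

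Next I would treat the non-split case, where $\text{Aut}~(E_2) \simeq \mathbb{G}_m \times \mathbb{G}_a$ by \cite[Proposition 3.1(2)]{B-S}. The isomorphism classes of non-split $E_2$ are parametrized by $(\text{Ext}^1(L,L)\setminus\{0\})$ modulo $\mathbb{G}_m$, i.e. by $\mathbb{P}(\text{Ext}^1(L,L)) \simeq \mathbb{P}^{g_2-1}$, using $\dim \text{Ext}^1(L,L) = \dim H^1(X_2,\mathcal{O}_{X_2}) = g_2$. For each such $E_2$ the fibre contributes $N_q(\text{GL}~(2,\mathbb{F}_q)/(\mathbb{G}_m \times \mathbb{G}_a))$ classes; since $N_q(\text{GL}~(2,\mathbb{F}_q)) = (q^2-1)(q^2-q)$ and $N_q(\mathbb{G}_m \times \mathbb{G}_a) = q(q-1)$, this quotient has $q^2-1$ points. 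Thus the non-split case contributes $N_q(M_1)2^{2g_2}N_q(\mathbb{P}^{g_2-1})(q^2-1)$, and summing the two cases yields the stated formula.

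The orbit counting and the group-order arithmetic are routine; the step that requires care is the determination of $\text{Aut}~(E_2)$ in each case and the identification of the non-split extensions with $\mathbb{P}^{g_2-1}$. The one real contrast with the $K-K_0$ computation of Lemma \ref{N_q(M_1*A)} is that here the sub- and quotient-bundles coincide, so the extensions of $L$ by $L$ form a single family rather than two; this is precisely why $\mathbb{P}^{g_2-1}$ enters with coefficient $1$ instead of $2$.
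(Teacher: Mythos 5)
Your proposal is correct and takes essentially the same approach as the paper's proof: the same split/non-split case division, the same determination of $\text{Aut}(E_{2})$ via \cite[Proposition 3.1]{B-S}, the same identification of the non-split bundles with $\mathbb{P}^{g_{2}-1}$, and the same count of $N_{q}\bigl(\text{GL}(2,\mathbb{F}_{q})/\text{Aut}(E_{2})\bigr)$ classes over each bundle (including the explicit $T^{-1}$ diagram in the split case). The only cosmetic difference is that you state the orbit-counting principle uniformly at the outset rather than case by case; the content is identical.
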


\subsection{$\mathbf{Computation~~of~~N_{q}(N)}$}
\begin{lemma}
\begin{equation}\label{Expression for N_q(N)}
N_{q}(N)\, = \, N_{q}(M_{1}) N_{q}(M_{2}(-1)) N_{q}(\mathbb{P}^{1})N_{q}(\mathbb{P}^{1}),
\end{equation}
where by $M_2(-1)$ we mean the moduli space of stable bundles of rank two and degree -1 with fixed determinant.
\begin{proof}
 By Lemma \ref{divisor}, $N \,\simeq\, N_1 \times N_2,$ where $N_1$ and $N_2$ are as described in the notation in \S \ref{notation}. Therefore
 \begin{equation}\label{N_q(N)= N_q(P_1) N_q(P_2)}
  N_q(N)\,= \, N_q(N_1)N_q(N_2).
 \end{equation}
 By the choice of weights, it is automatic that

 (i) parabolic stability coincides with the semi-stability of the underlying bundle and

 (ii) underlying bundle is stable implies  it is parabolic stable for any quasi-parabolic structure.

  Then it is known that the canonical map $N_1 \rightarrow M_1$ (which sends a parabolic vector bundle $E_{1_{*}} \in N_1$ to the underlying bundle $E_1 \in M_1$) is a $\mathbb{P}^{1}-$fibration over $M_1,$ locally trivial in Zariski topology. So we have
 \begin{equation}\label{N_q(P_1)}
   N_q(N_1) \,=\, N_q(\mathbb{P}^{1}) N_q(M_1).
 \end{equation}
 Now to find $N_q(N_2),$ let $E_{2_{*}} \in N_2$ be arbitrary. Then we have a canonical surjection $E_2(p) \rightarrow \frac{E_2(p)}{F^{2}E_{2}(p)},$ where $F^{2}E_{2}(p)$ is a one-dimensional subspace of $E_2(p).$ Let $W$ be the kernel of this map. Then clearly $W$ is locally free of rank two and degree -1. By \cite[Proposition 6]{B}, $W$ is stable and the map $\psi:N_2 \rightarrow M_2(-1)$ which sends $E_2$ to $W$ is a $\mathbb{P}^{1}-$fibration, locally trivial in Zariski topology. Therefore
 \begin{equation}\label{N_q(P_2)}
  N_q(N_2) \, = \, N_q(\mathbb{P}^{1})N_q(M_2(-1)).
 \end{equation}
 Using equations \eqref{N_q(P_1)} and \eqref{N_q(P_2)} in the equation \eqref{N_q(N)= N_q(P_1) N_q(P_2)}, we get the equation \eqref{Expression for N_q(N)}.
\end{proof}
\end{lemma}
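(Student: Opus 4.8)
The plan is to build on the product decomposition $N \simeq N_1 \times N_2$ established in the Proposition of \S 2. Since the number of $\mathbb{F}_q$-rational points of a product of varieties is the product of the point counts, this decomposition immediately yields
\begin{equation*}
N_q(N) = N_q(N_1)\, N_q(N_2).
\end{equation*}
It therefore suffices to show that each factor is a Zariski-locally-trivial $\mathbb{P}^1$-fibration over a moduli space of bundles, $N_1$ over $M_1$ and $N_2$ over $M_2(-1)$, so that the point counts split off a factor of $N_q(\mathbb{P}^1)$ each.

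For $N_1$ I would invoke the two consequences of the weight choice recorded after the statement: by property (ii), every quasi-parabolic structure on a stable bundle is automatically parabolic stable. As $M_1$ parametrizes stable rank-two bundles on $X_1$ with determinant $L_1$ of odd degree, it is a fine moduli space, so the universal bundle on $X_1 \times M_1$ exists. The forgetful map $N_1 \to M_1$ sending $E_{1_*}$ to its underlying $E_1$ then has fiber over $[E_1]$ equal to the set of lines $F^2 E_1(p) \subset E_1(p)$, i.e.\ $\mathbb{P}(E_1(p)) \cong \mathbb{P}^1$; projectivizing the restriction of the universal bundle to $\{p\} \times M_1$ realizes this as a $\mathbb{P}^1$-bundle, Zariski-locally trivial. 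Hence $N_q(N_1) = N_q(\mathbb{P}^1)\, N_q(M_1)$.

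The treatment of $N_2$ is the main obstacle, because over $X_2$ the determinant $\mathcal{O}_{X_2}$ has even degree, so $M_2$ contains strictly semistable points and is not fine; the projectivization argument used for $N_1$ is unavailable. I would instead pass to a Hecke (elementary) modification. Given $E_{2_*} = (E_2, 0 \subset F^2E_2(p) \subset E_2(p))$, let $W$ be the kernel of the canonical surjection $E_2 \to E_2(p)/F^2E_2(p)$ supported at $p$; then $W$ is locally free of rank two and degree $-1$. The decisive input is \cite[Proposition 6]{B}, which guarantees that $W$ is stable — the odd degree $-1$ forcing stability and making $M_2(-1)$ fine — and that the induced map $\psi : N_2 \to M_2(-1)$ is a $\mathbb{P}^1$-fibration, locally trivial in the Zariski topology; the fiber $\mathbb{P}^1$ encodes the choice of line along which the inverse modification reconstructs $E_{2_*}$ from $W$. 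This yields $N_q(N_2) = N_q(\mathbb{P}^1)\, N_q(M_2(-1))$.

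Finally I would substitute both factor counts into $N_q(N) = N_q(N_1)\, N_q(N_2)$ and rearrange to obtain
\begin{equation*}
N_q(N) = N_q(M_1)\, N_q(M_2(-1))\, N_q(\mathbb{P}^1)\, N_q(\mathbb{P}^1),
\end{equation*}
which is the assertion. The genuinely delicate points are the stability of the modification $W$ and the Zariski-local triviality of $\psi$, both supplied by \cite[Proposition 6]{B}; everything else is formal, following from multiplicativity of point counts over products and fibrations that are locally trivial in the Zariski topology.
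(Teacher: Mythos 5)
Your proposal is correct and follows essentially the same route as the paper: the product decomposition $N \simeq N_1 \times N_2$, multiplicativity of point counts, the forgetful $\mathbb{P}^1$-fibration $N_1 \to M_1$, and the Hecke-modification $\mathbb{P}^1$-fibration $N_2 \to M_2(-1)$ justified by \cite[Proposition 6]{B}. The extra details you supply (fineness of $M_1$ and the universal-bundle projectivization, and the reason the same argument fails for $M_2$) are sound refinements of the paper's argument rather than a different method.
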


Using equations \eqref{N_q(M_1*M_2(S))}, \eqref{Expression_for_N_q(K-K0)}, \eqref{Expression_for_N_q(K_0)} and \eqref{Expression for N_q(N)} in the equation \eqref{Equation 3.2}, we have
\begin{eqnarray}\label{Expression_for_N_q(M12)}
N_{q}(M_{12}) &=& N_{q}(M_{1})N_{q}(\text{PGL}~(2,\mathbb{F}_q)[N_{q}(M_{L}(2,1)) + \frac{q^{g_{2}-1}N_{q}(J_{2})}{q^{2}-1} - (\beta_{1} + \beta_{2}) (q-1)] \nonumber \\
& &+ N_{q}(M_{1})[N_{q}(\tilde{K})(q^{2}-q)+ N_{q}(J_{2})[q + q(q^2-1)N_{q}(\mathbb{P}^{g_{2}-2})] \nonumber \\
 & &- 2^{2g_{2}}[q+ q(q^{2}-1)N_{q}(\mathbb{P}^{g_{2}-2})+ (q^{2}-q)N_{q}(\mathbb{P}^{g_{2}-1})]] \nonumber \\
& &+ N_{q}(M_{1})2^{2g_{2}}[1 +(q^{2}-1)N_{q}(\mathbb{P}^{g_{2}-1})] \nonumber \\
& &+ N_{q}(M_{1}) N_{q}(M_{2}(-1)) N_{q}(\mathbb{P}^{1})N_{q}(\mathbb{P}^{1}).
\end{eqnarray}

\section{The Poincar$\acute{\text{e}}$ Polynomial Computation}
Let $R$ be an algebraic number ring and $\mathfrak q$ be a maximal ideal of $R$ contracting to a nonzero maximal ideal $p\mathbb Z$. The field $R/\mathfrak q$ is a finite extension of $\mathbb Z/p\mathbb Z$. We denote it by $\mathbb F_{q}$. Let $X_{R}$ be a projective curve over $R$ with two smooth components $X_{1,R}$ and $X_{2,R}$ intersecting transversally at a nodal point $p_R$. Let $X_{\mathbb F_{q}}$ and $X_{\mathbb C}$ be the curves obtained by base changing $X_R$ to $\mathbb F_{q}$ and $\mathbb C$ respectively.

Fix an odd integer $\chi$ and a polarization $(a_1,a_2)$ such that $a_1\chi$ is not an integer. Let $\xi_R\,=\,(L_{1,R},\mathcal O_{X_{2,R}})$ be a line bundle  on $X_R$, where $L_{1,R}$ is a line bundle of degree $1$ on $X_{1,R}$. Let $\xi_{\mathbb F_q}$ and $\xi_{\mathbb C}$ be the corresponding line bundles over $X_{\mathbb F_q}$ and $X_{\mathbb C}$ respectively. Let $M_{12,R}$ (resp. $M_{12,\mathbb F_q}$ and $M_{12,\mathbb C}$) be the moduli space of stable torsion free sheaves with Euler characteristic $\chi$ and fixed determinant $\xi_{R}$ (resp. $\xi_{\mathbb F_q}$ and $\xi_{\mathbb C}$), where $\chi\,=\,\chi_1 +\chi_2 -2$ and $\chi_1$, $\chi_2$ satisfy inequalities \eqref{4}.  Since $M_{12,R}$ itself is a fine moduli space (cf. \S 2), by the base change property $M_{12,R} \times \text{Spec}(\mathbb F_q)\,\simeq M_{12,\mathbb F_q}$  and $M_{12,R} \times \text{Spec}(\mathbb C) \,\simeq M_{12,\mathbb C}$. Similar description holds for $M_{21,R}$ also. For notational convenience, from now on we drop the subscripts $\mathbb F_q$ and $\mathbb C$ and write $M_{12}$ (resp. $M_{21}$). Subscripts will be understood from the context.

Now we recall few generalities which are used in computing Poincar$\acute{\text{e}}$ polynomial using Weil conjectures. Let $Y_R$ be a smooth projective variety of dimension $n$ defined over an algebraic number ring $R$ and $Y_{\mathbb{C}}$ (resp. Y) be a smooth projective variety obtained from $Y_{R}$ by base changing to $\mathbb{C}$ (resp. $\mathbb F_{q}$).
Following Weil, we define the $zeta$ function of $Y$ as
\begin{eqnarray*}
Z(Y,t) &=& \text{exp}~(\sum_{r=0}^{\infty}N_{q^{r}} \frac{t^{r}}{r}).
\end{eqnarray*}
It is clear that $Z(Y,t)$ is a formal power series with rational coefficients. Now the Weil Conjectures state that  $Z(Y,t)$ can be written in the form
$$Z(Y,t)\, = \, \frac{P_{1}(t)P_{3}(t)\cdots P_{2n-1}(t)}{P_{0}(t)P_{2}(t)\cdots P_{2n}(t)}.$$
where $P_{0}(t) \, = \, (1-t);$ $ P_{2n}(t) \, = \, (1-q^{n}t);$ and for each $1 \leq i \leq (2n-1),$ $P_{i}(t)$ is a polynomial with integer coefficients, which can be written as $$P_{i}(t) \, = \, \prod (1-\beta_{ij}t),$$ where $\beta_{ij}$'s are algebraic integers with $|{\beta_{ij}}| = q^{\frac{i}{2}}.$\\
Furthermore, the Betti numbers of $Y_{\mathbb{C}}$ are given by $B_{i}(Y_{\mathbb{C}}) \, = \, \text{\text{deg}}~(P_{i}(t)).$\\
For $r \geq 1,$ and $Y$ a projective variety over $\mathbb{F}_{q}$ of dimension $n,$ set $\widetilde{N_{q^{r}}}(Y) \,=\, q^{-rn}N_{q^{r}}(Y).$
\par
 We recall the following lemma of Kirwan which is the main tool in the computation of the Poincar$\acute{\text{e}}$ Polynomial of $M_{12}$-

\begin{lemma}\cite[Page-186]{KIR}
Suppose that $Y_{1}, \ldots, Y_{k}$ are smooth projective varieties over~ $\mathbb{F}_{q}$ obtained as reduction mod p of varieties defined in characteristic 0. Suppose that $f$ is a rational function in $(k+1)$ variables with $\mathbb{Z}$ coefficients such that $$f(q^{-r},\widetilde{N_{q^{r}}}(Y_{1}), \ldots, \widetilde{N_{q^{r}}}(Y_{k})) \, = \, 0,$$ for all $r \geq 1.$ Then $$f(t^{2},P_{t}(Y_{1}), \ldots , P_{t}(Y_{k})) \, = \, 0,$$ where $P_{t}(Y_{i})$ is the Poincar$\acute{\text{e}}$ Polynomial of the corresponding variety over $\mathbb{C}.$
\end{lemma}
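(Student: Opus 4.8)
\emph{Plan.} The engine is exactly the Weil conjectures recalled above, supplemented by Deligne's purity and the comparison between \'etale and singular Betti numbers (valid here since each $Y_i$ has good reduction from the number ring $R$, so $\dim H^m_{et}(\overline{Y_i},\mathbb Q_\ell)=B_m(Y_{i,\mathbb C})=[t^m]P_t(Y_i)$). First I would reduce to the case where $f$ is a polynomial: writing $f=g/h$ with $g,h\in\mathbb Z[x_0,\dots,x_k]$ and clearing denominators, the hypothesis forces $g(q^{-r},\widetilde N_{q^r}(Y_1),\dots,\widetilde N_{q^r}(Y_k))=0$ for all $r\ge 1$, so it suffices to prove the conclusion for polynomial $f$ and to check afterwards that the denominator introduces no pole under the final substitution.

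Next I would substitute the trace formula. For each $Y_i$ of complex dimension $n_i$, Grothendieck--Lefschetz together with purity gives
\begin{equation*}
N_{q^r}(Y_i)=\sum_{m=0}^{2n_i}(-1)^m\sum_{j}\beta_{mj}^{(i)\,r},\qquad \bigl|\beta_{mj}^{(i)}\bigr|=q^{m/2},
\end{equation*}
where the number of weight-$m$ eigenvalues $\beta_{mj}^{(i)}$ equals $B_m(Y_i)=[t^m]P_t(Y_i)$. Inserting these together with the normalizing factors $q^{-rn_i}$ into $g$ and expanding, I would obtain a finite exponential identity
\begin{equation*}
g\bigl(q^{-r},\widetilde N_{q^r}(Y_1),\dots,\widetilde N_{q^r}(Y_k)\bigr)=\sum_{\ell}C_\ell\,\rho_\ell^{\,r}=0\qquad(r\ge 1),
\end{equation*}
in which each $C_\ell\in\mathbb Z$ is a coefficient of $g$ times a sign $(-1)^{(\cdots)}$, and each nonzero $\rho_\ell$ is a product of a power of $q^{-1}$ with a selection of the eigenvalues $\beta_{mj}^{(i)}$.

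From here the plan is to use linear independence of the distinct geometric sequences $r\mapsto\rho^{\,r}$: grouping the terms by the value of $\rho_\ell$, the vanishing of the sum for all $r\ge 1$ forces $\sum_{\ell:\rho_\ell=\mu}C_\ell=0$ for every distinct $\mu$. Purity then lets me organize these relations by weight, since $|\rho_\ell|=q^{w_\ell}$ records the total cohomological degree of the chosen eigenvalues modulo the $q$-power normalization, while Poincar\'e duality $B_m=B_{2n_i-m}$ reconciles the factor $q^{-rn_i}$ with the target substitution $x_0\mapsto t^2$. Summing the per-value relations over each weight class and reading the resulting integer combination of Betti numbers as the coefficient of the matching power of $t$ in $g(t^2,P_t(Y_1),\dots,P_t(Y_k))$ should show every coefficient vanishes, giving the conclusion. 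In the Tate-type case (all $\beta_{mj}^{(i)}$ integral powers of $q$) this collapses to the clean identity $\widetilde N_{q^r}(Y_i)=P_t(Y_i)\big|_{t^2=q^{-r}}$, whereupon the whole argument reduces to noting that a rational function of one variable vanishing at $q^{-r}$ for all $r$ vanishes identically.

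The hard part will be precisely this last reconciliation. The point counts $\widetilde N_{q^r}(Y_i)$ remember the full Frobenius eigenvalues --- their phases and the alternating signs $(-1)^m$ coming from the Euler characteristic --- whereas the Poincar\'e polynomials $P_t(Y_i)$ remember only the \emph{number} of eigenvalues in each weight. Passing from this trace data to the dimension data is exactly what purity $|\beta_{mj}^{(i)}|=q^{m/2}$ together with linear independence of characters make possible, and the delicate bookkeeping is to verify that the weight grading supplied by the absolute values matches the cohomological grading recorded by $t$, with the normalizing powers $q^{-rn_i}$ absorbed through Poincar\'e duality.
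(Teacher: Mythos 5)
The paper offers no proof of this lemma to compare against: it is imported verbatim from Kirwan's book via the citation \cite[Page-186]{KIR} and used as a black box. Judged on its own, your plan is the correct and essentially standard argument, with all the right ingredients: clearing denominators, the trace formula plus Deligne's purity, the identification of eigenvalue counts with complex Betti numbers via smooth proper base change and comparison (exactly where the hypothesis ``reduction mod $p$ of a characteristic-$0$ variety'' enters), linear independence of distinct geometric sequences, and Poincar\'e duality to absorb the normalization $q^{-rn_i}$. Since you flagged the final reconciliation as the hard part without carrying it out, let me confirm it does close. Poincar\'e duality makes the Frobenius eigenvalue multiset of $Y_i$ stable under $\beta\mapsto q^{n_i}/\beta$ with degree $m\mapsto 2n_i-m$, so one may rewrite
\begin{equation*}
\widetilde{N_{q^{r}}}(Y_i)\;=\;\sum_{m=0}^{2n_i}(-1)^m\sum_{j}\bigl(\beta^{(i)}_{mj}\bigr)^{-r},
\end{equation*}
and with this normalization the term of $g$ produced by a monomial $c_E\,x_0^{e_0}x_1^{e_1}\cdots x_k^{e_k}$ and an ordered choice of eigenvalues of total degree $M$ has base $\rho$ with $|\rho|=q^{-s/2}$, where $s:=2e_0+M$ is precisely the $t$-degree of the corresponding contribution to $g(t^2,P_t(Y_1),\dots,P_t(Y_k))$; moreover the Euler-characteristic sign $(-1)^M=(-1)^{s-2e_0}=(-1)^s$ is constant on each absolute-value class. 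Hence purity guarantees that accidental coincidences among the $\rho$'s can only occur within a single class, and summing your per-value relations over the class of weight $s$ yields $0=(-1)^s\,[t^s]\,g(t^2,P_t(Y_1),\dots,P_t(Y_k))$ for every $s$, which is the conclusion. The one loose end (in your write-up, but equally in the statement itself) is that the conclusion presupposes the denominator $h$ of $f$ does not vanish identically under the final substitution; this cannot be deduced from the hypothesis, since the implication only runs from point-count vanishing to $t$-vanishing (the point-count grouping is strictly finer). It is harmless in the paper's application, because in \eqref{Expression_for_N_q(M12)_tilde} the denominators involve the variable $x_0$ alone and so become nonzero polynomials such as $1-t^{4}$ after substitution.
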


 Multiplying both sides of the equation \eqref{Expression_for_N_q(M12)} by $q^{-\text{\text{\text{\text{dim}}}}~(M_{12})}$ and using the fact that $N_{q}(\text{PGL}~(2,\mathbb{F}_q)) \, = \, q(q^{2}-1),$ and $N_{q}(\mathbb{P}^{1})\, = \, (q+1),$ and simplifying, we have
\begin{eqnarray}\label{Expression_for_N_q(M12)_tilde}
\widetilde{N_{q}}(M_{12})&=& \widetilde{N_{q}}(M_{1})(1-q^{-2})[\widetilde{N_{q}}(M_{L}(2,1)) + \widetilde{N_{q}}(J_{2})\{\frac{q^{-g_{2}}}{1-q^{-2}}\}
 - q^{-3g_{2}+3}(\beta_{1} + \beta_{2})(q-1)] \nonumber \\
& &  + \widetilde{N_{q}}(M_{1}) 2^{2g_{2}} [q^{-3g_{2}} + q^{-2g_{2}+1}(1-q^{-2})\widetilde{N_{q}}(\mathbb{P}^{g_{2}-1})] \nonumber \\
& &+ \widetilde{N_{q}}(M_{1})[\widetilde{N_{q}}(\tilde{K}) q^{-2g_{2}+2} (1-q^{-1}) + \widetilde{N_{q}}(J_{2})[q^{-2g_{2}+1} + q^{-g_{2}+1}(1-q^{-2})\widetilde{N_{q}}(\mathbb{P}^{g_{2}-2})] \nonumber \\
& &-2^{2g_{2}} [q^{-3g_{2}+1} + q^{-2g_{2}+1}(1-q^{-2})\widetilde{N_{q}}(\mathbb{P}^{g_{2}-2}) + q^{-2g_{2}+1}(1-q^{-1})\widetilde{N_{q}}(\mathbb{P}^{g_{2}-1})] \nonumber \\
& &+ \widetilde{N_{q}}(M_{1})\widetilde{N_{q}}(M_{2}(-1))q^{-1}(1+2q^{-1}+q^{-2})].
\end{eqnarray}
It is to be noted that all the varieties that occur in the equation \eqref{Expression_for_N_q(M12)_tilde} are smooth projective varieties. Further, replacing $q$ by $q^r$ in the equation \eqref{Expression_for_N_q(M12)_tilde}, one can get the expression for $\widetilde{N_{q^{r}}}(M_{12})$, for $r \, \geq \,1$.

We can now apply Kirwan's Lemma and obtain the Poincar$\acute{\text{e}}$ Polynomial of $M_{12}.$
\begin{theorem}\label{P_t(M_12)}
The Poincar$\acute{\text{e}}$ Polynomial of $M_{12}$ is given as follows:
\begin{eqnarray*}
P_{t}(M_{12})&=& P_{t}(M_{1})(1-t^{4})[P_{t}(M_{L}(2,1))+ P_{t}(J_{2})\{\frac{t^{2g_{2}}}{1-t^{4}}\} - (\widetilde{\beta_{1}(t)} + \widetilde{\beta_{2}(t)})]\\
& &+ P_{t}(M_{1})2^{2g_{2}}[t^{6g_{2}} + t^{4g_{2}-2}(1-t^{4})P_{t}(\mathbb{P}^{g_{2}-1})]\\
& & + P_{t}(M_{1})[P_{t}(\tilde{K}) t^{4g_{2}-4}(1-t^{2}) + P_{t}(J_{2})[t^{4g_{2}-2} + t^{2g_{2}-2}(1-t^{4})P_{t}(\mathbb{P}^{g_{2}-2})]\\
& &-2^{2g_{2}}[t^{6g_{2}-2} + t^{4g_{2}-2}(1-t^{4})P_{t}(\mathbb{P}^{g_{2}-2})+ t^{4g_{2}-2}(1-t^{2})P_{t}(\mathbb{P}^{g_{2}-1})]]\\
& &+ P_{t}(M_{1})P_{t}(M_{2}(-1))t^{2}(1+2t^{2}+t^{4}),
\end{eqnarray*}

where
\begin{eqnarray*}
P_{t}(M_{1}) &=& \frac{(1+t^{3})^{2g_{1}}-t^{2g_{1}}(1+t)^{2g_{1}}}{(1-t^{2})(1-t^{4})}, \\
P_{t}(M_{L}(2,1)) &=& \frac{(1+t^{3})^{2g_{2}}-t^{2g_{2}}(1+t)^{2g_{2}}}{(1-t^{2})(1-t^{4})} \,=\, P_{t}(M_{2}(-1)), \\
P_{t}(J_{2}) &=& (1+t)^{2g_{2}},\\
\widetilde{\beta_{1}(t)} &=& P_{t}(\tilde{K})[\frac{t^{4g_{2}-4}}{(1+t^{2})}] + P_{t}(J_{2})[\frac{t^{4g_{2}-2}}{(1-t^{4})} + P_{t}(\mathbb{P}^{g_{2}-2})t^{2g_{2}-2}] \\
& &-P_{t}(\mathbb{P}^{g_{2}-1})[2^{2g_{2}} \frac{t^{4g_{2}-2}}{(1+t^{2})}] - 2^{2g_{2}}[\frac{t^{6g_{2}-2}}{(1-t^{4})} + P_{t}(\mathbb{P}^{g_{2}-2})t^{4g_{2}-2}],\\
\widetilde{\beta_{2}(t)} &=& 2^{2g_{2}}[\frac{t^{6g_{2}}}{(1-t^{4})} + t^{4g_{2}-2} P_{t}(\mathbb{P}^{g_{2}-1})],\\
P_{t}(\mathbb{P}^{n}) &=& (1+t^{2}+\cdots+t^{2n}),\\
P_{t}(\tilde{K}) &=& \sum _{i=0}^{2g_{2}}b_{i}(\tilde{K})t^{i},
\end{eqnarray*}
where\\
\begin{eqnarray*}
b_{i}(\tilde{K}) &=& 0 ,~\text{if i is odd},\\
&=& 1,~\text{if i= 0 or}~ 2g_{2},~ \text{and} \\
&=& {2g_{2} \choose i} + 2^{2g_{2}}  , ~ \text{if i is even and}~ 0 < i < 2g_{2}.
\end{eqnarray*}
\end{theorem}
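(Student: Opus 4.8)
The plan is to obtain the formula by feeding the normalized point-count identity \eqref{Expression_for_N_q(M12)_tilde} into Kirwan's Lemma. First I would record that \emph{every} space occurring on the right-hand side of \eqref{Expression_for_N_q(M12)_tilde} --- namely $M_1$, $M_L(2,1)$, $M_2(-1)$, $J_2$, $\tilde K$ and the projective spaces $\mathbb{P}^{g_2-1}$, $\mathbb{P}^{g_2-2}$, together with $M_{12}$ itself --- is a smooth projective variety arising as the reduction mod $p$ of a variety defined in characteristic $0$. Moreover, as noted just after \eqref{Expression_for_N_q(M12)_tilde}, replacing $q$ by $q^r$ gives the same identity for $\widetilde{N_{q^r}}(M_{12})$ for every $r\geq 1$. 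Thus \eqref{Expression_for_N_q(M12)_tilde} takes the shape $\widetilde{N_{q^r}}(M_{12}) = f(q^{-r},\widetilde{N_{q^r}}(M_1),\widetilde{N_{q^r}}(M_L(2,1)),\dots)$ for one fixed rational function $f$, valid for all $r$, which is exactly the input Kirwan's Lemma demands.

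The one step requiring care is that $\beta_1$ and $\beta_2$, as written in \eqref{beta_1} and \eqref{beta_2}, involve $N_q(A)$ and $N_q(B)$, and $A$, $B$ are not smooth projective varieties but the arithmetically-defined strata of $K-K_0$ according to whether $L$ is defined over $\mathbb{F}_q$ or only over $\mathbb{F}_{q^2}$. So before invoking Kirwan's Lemma I would eliminate them: using \eqref{N_q(A)}, \eqref{N_q(B)} and \eqref{N_q(K-K_0)} one rewrites $N_q(A)$ and $N_q(B)$ purely in terms of $N_q(J_2)$, $N_q(\tilde K)$, $N_q(\mathbb{P}^{g_2-1})$ and the integer constant $2^{2g_2}$. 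After this substitution $\beta_1+\beta_2$, and hence the whole right-hand side of \eqref{Expression_for_N_q(M12)_tilde}, becomes a genuine rational function (with $\mathbb{Q}$-coefficients, which is harmless) in $q^{-r}$ and the normalized point-counts of the admissible smooth projective varieties above.

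With the identity in this admissible form, the application of Kirwan's Lemma is immediate: one substitutes $q^{-1}\mapsto t^2$ and $\widetilde{N_q}(Y_i)\mapsto P_t(Y_i)$ for each constituent $Y_i$. Under this substitution $1-q^{-2}\mapsto 1-t^4$, the factor $q^{-1}(1+2q^{-1}+q^{-2})$ coming from the two $\mathbb{P}^1$-fibrations in \eqref{Expression for N_q(N)} becomes $t^2(1+2t^2+t^4)$, and $\beta_1$, $\beta_2$ transform precisely into the expressions $\widetilde{\beta_1(t)}$ and $\widetilde{\beta_2(t)}$ recorded in the statement. This yields the displayed formula for $P_t(M_{12})$ in terms of $P_t(M_1)$, $P_t(M_L(2,1))$, $P_t(M_2(-1))$, $P_t(J_2)$, $P_t(\tilde K)$ and the $P_t(\mathbb{P}^n)$.

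Finally I would insert the known Poincar\'e polynomials of the pieces: the Harder--Narasimhan--Atiyah--Bott formula for rank-$2$ fixed-(odd-)determinant moduli spaces gives $P_t(M_1)$ and $P_t(M_L(2,1))=P_t(M_2(-1))$; the Jacobian gives $P_t(J_2)=(1+t)^{2g_2}$; projective spaces give $P_t(\mathbb{P}^n)=1+t^2+\cdots+t^{2n}$; and the desingularized Kummer variety $\tilde K$ has the Betti numbers $b_i(\tilde K)$ listed, the even part $\binom{2g_2}{i}$ coming from the $(\pm1)$-invariant cohomology of $J_2$ and the $2^{2g_2}$ correction in the middle degrees from the exceptional $\mathbb{P}^{g_2-1}$'s of the resolution over the $2^{2g_2}$ two-torsion points. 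The main obstacle is precisely the bookkeeping of the second paragraph together with verifying the transformation of $\beta_1,\beta_2$ into $\widetilde{\beta_1(t)},\widetilde{\beta_2(t)}$: one must certify that $A$ and $B$ can be replaced by point-counts of smooth projective varieties \emph{before} Kirwan's Lemma is legitimately applicable, since the Lemma is false for arbitrary constructible strata. Once that reduction is in place, both the substitution and the insertion of the standard polynomials are routine.
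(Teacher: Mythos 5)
Your proposal is correct and follows essentially the same route as the paper: pass from the point-count identity \eqref{Expression_for_N_q(M12)} to the normalized form \eqref{Expression_for_N_q(M12)_tilde}, observe it holds for all $q^r$, apply Kirwan's Lemma via $q^{-r}\mapsto t^{2}$, $\widetilde{N_{q^r}}(Y_i)\mapsto P_t(Y_i)$, and insert the known Poincar\'e polynomials of $M_1$, $M_L(2,1)$, $M_2(-1)$, $J_2$, $\mathbb{P}^n$ and of $\tilde K$ (Spanier). Your explicit elimination of $N_q(A)$ and $N_q(B)$ through \eqref{N_q(A)}, \eqref{N_q(B)} and \eqref{N_q(K-K_0)} before invoking the Lemma is exactly the step the paper carries out implicitly in converting $\beta_1,\beta_2$ into $\widetilde{\beta_1(t)},\widetilde{\beta_2(t)}$, so the two arguments coincide (and your version states the needed care more precisely).
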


(see [\cite{SP}, Theorem 2]).\\

We give below the Betti numbers of the variety $M_{12}$ for some low genus curves.\\

\begin{center}
\begin{longtable} [c]{|c|c|c|c|c|c|c|}
\caption{\label{long}}\\
\hline

\multicolumn {7} {|c|} {Betti Nos. of $M_{12}$ for low genus} \\
\hline
\bf Betti & \bf $g_{1}=3$ & \bf $g_{1}=3$ & \bf $g_{1}=3$ & \bf $g_{1}=4$ & \bf $g_{1}=4$ & \bf $g_{1}=4$\\
\bf No. &   \bf $g_{2}=3$ & \bf $g_{2}=4$ & \bf $g_{2}=5$ & \bf $g_{2}=3$ & \bf $g_{2}=4$ & \bf $g_{2}=5$ \\
\hline
\endfirsthead

\hline
\multicolumn {7} {|c|} {Continuation of Table \ref{long}} \\
\hline
\bf Betti & \bf $g_{1}=3$ & \bf $g_{1}=3$ & \bf $g_{1}=3$ & \bf $g_{1}=4$ & \bf $g_{1}=4$ & \bf $g_{1}=4$\\
\bf No. &   \bf $g_{2}=3$ & \bf $g_{2}=4$ & \bf $g_{2}=5$ & \bf $g_{2}=3$ & \bf $g_{2}=4$ & \bf $g_{2}=5$ \\
\hline
\endhead

\hline
 \endfoot

 \hline
 \multicolumn{7}{| c |}{End of Table}\\
 \hline\hline
 \endlastfoot

$B_{0}$ & 1 & 1 & 1 & 1 & 1 & 1 \\

\hline
$B_{1}$ & 0 & 0 & 0 & 0 & 0 & 0 \\

\hline
$B_{2}$ & 3 &  3 & 3 & 3 & 3 & 3 \\

\hline
$B_{3}$ & 12 & 14 & 16 & 14 & 16 & 18 \\

\hline
$B_{4}$ & 8 & 8 & 8 & 8 & 8 & 8 \\

\hline
$B_{5}$ & 36 & 42 & 48 & 42 & 48 & 54 \\

\hline
$B_{6}$ & 81 & 106 & 135 & 107 & 136 & 169 \\

\hline
$B_{7}$ & 90 & 106 & 122 & 112 & 128 & 144 \\

\hline
$B_{8}$ & 207 & 284 & 371 & 299 & 388 & 487 \\

\hline
$B_{9}$ & 356 & 542 & 768 & 568 & 808 & 1096 \\

\hline
$B_{10}$ & 435 & 657 & 879 & 708 & 974 & 1240 \\

\hline
$B_{11}$ & 698 & 1224 & 1850 & 1320 & 2040 & 2880 \\

\hline
$B_{12}$ & 992 & 1953 & 3203 & 2089 & 3524 & 5336 \\

\hline
$B_{13}$ & 1120 & 2412 & 3988 & 2592 & 4520 & 6756 \\

\hline
$B_{14}$ & 1345 & 3520 & 6606 & 3781 & 7448 & 12302 \\

\hline
$B_{15}$ & 1520 & 4784 & 9976 & 5110 & 11152 & 19872 \\

\hline
$B_{16}$ & 1345 & 5386 & 12271 & 5731 & 13821 & 25555 \\

\hline
$B_{17}$ & 1120 & 6236 & 16900 & 6626 & 18920 & 39008 \\

\hline
$B_{18}$ & 992 & 6884 & 22191 & 7314 & 24621 & 55967 \\

\hline
$B_{19}$ & 698 & 6236 & 24804 & 6626 & 27496 & 68762 \\

\hline
$B_{20}$ & 435 & 5386 & 28060 & 5731 & 30996 & 90304 \\

\hline
$B_{21}$ & 356 & 4784 & 30512 &  & 33584 & 114202 \\

\hline
$B_{22}$ & 207 &  & 28060 &  & 30996 & 126671 \\

\hline
$B_{23}$ & 90 &  & 24804 &  & 27496 & 140424 \\

\hline
$B_{24}$ & 81 &  &  &  &  & 150346 \\

\hline
$B_{25}$ & 36 &  &  &  &  & 140424 \\

\hline
$B_{26}$ & 8 &  &  &  &  &  \\

\hline
$B_{27}$ & 12 &  &  &  &  &  \\

\hline
$B_{28}$ & 3 &  &  &  &  &  \\

\hline
$B_{29}$ & 0 &  &  &  &  &  \\

\hline
$B_{30}$ & 1 &  &  &  &  &  \\

\hline
\end{longtable}
\end{center}

\subsection{The Poincar$\acute{\text{e}}$ Polynomial of $M_{21}$}
We now briefly sketch the changes that need to be incorporated in order to find the Poincar$\acute{\text{e}}$ polynomial of $M_{21}$ (see \S \ref{notation} for the definition of $M_{21}$).

Let $L_1^{\prime}$ be the invertible sheaf on $X_1$ such that $L_1^{\prime^{\otimes 2}} \,=\, L_1^{*} \otimes \mathcal{O}_{X_1}(-p).$ This is possible because $L_1^{*} \otimes \mathcal{O}_{X_1}(-p)$ is of even degree. Let $L_2^{\prime} \,=\, \mathcal{O}_{X_{2}}(p).$ Now tensoring $M_{21}$ by the invertible sheaf $L^{\prime} \,=\, (L_1^{\prime}, L_2^{\prime},\overrightarrow{\lambda^{\prime}}),$ one gets an isomorphism between $M_{21}$ and $M_{21}^{\prime},$ where
\begin{eqnarray*}
 M_{21}^{\prime} &=& \{[F_{1}^{\prime},F_{2}^{\prime},\overleftarrow{S^{\prime}}] :~~[F_{1}^{\prime},F_{2}^{\prime},\overleftarrow{S^{\prime}}]~
 \text{is an isomorphism class of stable triples} ,\\
& & \text{rk}~(F_{i}^{\prime}) \,=\, 2,~\Lambda^{2}(F_{1}^{\prime}) \,\simeq\, \mathcal{O}_{X_1},~\Lambda^{2}(F_{2}^{\prime}) \,\simeq\, \mathcal{O}_{X_{2}}(p)\}.
\end{eqnarray*}
So finding the Poincar$\acute{\text{e}}$ Polynomial of $M_{21}$ is equivalent to finding the Poincar$\acute{\text{e}}$ Polynomial of $M_{21}^{\prime}.$
Now analogous to the natural map $\Phi$ which we had in \eqref{fi}, we have a surjective map
$$\Theta:M_{21}^{\prime} \rightarrow M_{1}^{\prime} \times M_{2}^{\prime}$$
which sends equivalence class of the triple $[F_{1}^{\prime},F_{2}^{\prime},\overleftarrow{S^{\prime}}]$ to $([F_{1}^{\prime}],[F_{2}^{\prime}]),$ where
\begin{eqnarray*}
  M_{1}^{\prime} & = & \{ [F_{1}^{\prime}] :~[F_{1}^{\prime}]~\text{is the gr-equivalence class of semi-stable vector bundles on}~X_{1}, \\  & & \text{rk}~(F_{1}^{\prime}) \,=\, 2,~\Lambda^{2}(F_{1}^{\prime}) \,\simeq\,  \mathcal{O}_{X_1}\},
\end{eqnarray*}
and
\begin{eqnarray*}
  M_{2}^{\prime}& = & \{ [F_{2}^{\prime}] :~[F_{2}^{\prime}]~\text{is the isomorphism class of stable bundles
  on}~X_{2},~\text{rk}~(F_{2}^{\prime}) \,=\, 2, \\
 & & \Lambda^{2}(F_{2}^{\prime}) \,\simeq\, \mathcal{O}_{X_{2}}(p) \}.
\end{eqnarray*}
Now we apply exactly the same stratification for $M_{1}^{\prime}$ as we did for $M_{2}$ earlier and repeat the same arguments as before to obtain the Poincar$\acute{\text{e}}$ polynomial of $M_{21}^{\prime}$ and hence for $M_{21}.$

\begin{remark}\label{euler_characteristic}
(i)It is easy to see that the Poincar$\acute{\text{e}}$ polynomial of $M_{12}$ for the case $g_{1}\, = \,i,~g_{2} \,= \,j$ is same as that of $M_{21}$ for the case
$g_{1} \,= \,j,~g_{2} \,=\, i,$ where $i,j \in \mathbb{Z}, i \geq 3, j\geq 3.$

(ii) The topological Euler characteristic $\chi$ of the spaces $M_{12}$ and $M_{21}$ can be obtained by evaluating the respective Poincar$\acute{\text{e}}$ polynomials at $t\,=\,-1.$ By Theorem \ref{P_t(M_12)} and the above remark, it follows that $\chi(M_{12}) \,=\, 0 \,=\, \chi(M_{21}).$
\end{remark}

\section{The Intersection Poincar$\acute{\text{e}}$ Polynomial Computation}

 By \cite[Lemma 2.4]{S-B}, it is known that the fixed determinant moduli space $M$ is a union of two irreducible, smooth varieties $M_{12}$ and $M_{21}$ intersecting transversally along the smooth divisor $N.$ Since both the irreducible components of $M$ are smooth and are of dimension $3g-3,$ by \cite[Theorem 19.2]{Wh}, $M$ has a Whitney stratification which makes it into a topological pseudomanifold (\cite[IV \S 2]{Bo}).

 Let $\widetilde{M},\widetilde{M_{12}}~\text{and}~\widetilde{M_{21}}$ be the algebraic normalizations of $M, M_{12}~ \text{and}~M_{21}$ respectively. Since $M_{12}$ and $M_{21}$ intersect transversally along $N,$ we have
 \begin{eqnarray}\label{tilde(M)}
  \widetilde{M} &=& \widetilde{M_{12}} \sqcup \widetilde{M_{21}} \nonumber \\
            & \simeq & M_{12} \sqcup M_{21}.
 \end{eqnarray}

 Now since $\widetilde{M},~ M_{12}~\text{and}~M_{21}$ are normal, their intersection homology groups (with top perversity $\overline{t}$) will coincide with the singular homology groups by the Theorem \ref{(G-M)_Theorem_4.3}. So Theorem \ref{(G-M)_Theorem_4.3} combined with the isomorphism \eqref{tilde(M)} gives
\begin{equation}\label{IH(tilde(M))_top_perversity}
 IH_{*}^{\overline{t}}(\widetilde{M}) \,\simeq\, H_{*}(\widetilde{M}) \,\simeq \, H_{*}(M_{12}) \oplus H_{*}(M_{21}).
\end{equation}

Now using the Remark \ref{normalization_related_remark} and the Theorem \ref{(G_M)_Theorem_4.2} in the equation \eqref{IH(tilde(M))_top_perversity}, we get
\begin{equation}\label{IH(M)_top_perversity}
 IH_{*}^{\overline{t}}(M) \,\simeq\, H_{*}(M_{12}) \oplus H_{*}(M_{21})
\end{equation}

Let $\text{IP}^{\overline{t}}~(M)$ denote the intersection Poincar$\acute{\text{e}}$ polynomial of $M$ with respect to the top perversity $\overline{t}$ (see definition \ref{definition_of_IPP}). Then by the equation \eqref{IH(M)_top_perversity}, we have

\begin{theorem}\label{IPP(M)}
\begin{equation*}
 \text{IP}^{\overline{t}}(M) \,=\, P_t(M_{12}) + P_t(M_{21}),
\end{equation*}
where $P_t(M_{12})~ \text{and}~P_t(M_{21})$ are the Poincar$\acute{\text{e}}$ polynomials of $M_{12}$ and $M_{21}$ that have been computed in the previous section.
\end{theorem}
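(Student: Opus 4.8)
The plan is to read off the identity by passing to dimensions, degree by degree, in the graded isomorphism of intersection homology groups already assembled in equation \eqref{IH(M)_top_perversity}. The substantive content has in fact been carried out in the preceding paragraphs: transversality of the two smooth components forces the algebraic normalization $\widetilde{M}$ to split as the disjoint union $M_{12} \sqcup M_{21}$, as recorded in \eqref{tilde(M)}; normality of each piece lets Theorem \ref{(G-M)_Theorem_4.3} identify $IH_*^{\overline{t}}$ with ordinary singular homology, giving \eqref{IH(tilde(M))_top_perversity}; and the homeomorphism between the topological and algebraic normalizations (Remark \ref{normalization_related_remark}) together with the normalization-invariance of intersection homology (Theorem \ref{(G_M)_Theorem_4.2}) transports the conclusion back to $M$ itself. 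What remains is formal bookkeeping on generating functions.

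First I would recall Definition \ref{definition_of_IPP}, so that $\text{IP}^{\overline{t}}(M) \,=\, \sum_{j} \beta_j^{\overline{t}} t^j$ with $\beta_j^{\overline{t}} \,=\, \dim_{\mathbb{R}} IH_j^{\overline{t}}(M)$. The one point deserving attention is that the isomorphism \eqref{IH(M)_top_perversity} is grading-preserving: each of the three isomorphisms composing it --- the one of Theorem \ref{(G-M)_Theorem_4.3}, the splitting \eqref{tilde(M)}, and the normalization map of Theorem \ref{(G_M)_Theorem_4.2} --- respects homological degree, so one in fact has a degreewise isomorphism
\begin{equation*}
 IH_j^{\overline{t}}(M) \,\simeq\, H_j(M_{12}) \oplus H_j(M_{21})
\end{equation*}
for every $j$.

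Taking $\dim_{\mathbb{R}}$ of both sides then yields $\beta_j^{\overline{t}} \,=\, \dim_{\mathbb{R}} H_j(M_{12}) + \dim_{\mathbb{R}} H_j(M_{21}) \,=\, b_j(M_{12}) + b_j(M_{21})$, where $b_j(\,\cdot\,)$ is the ordinary $j$-th Betti number. Multiplying by $t^j$ and summing over $j$ gives
\begin{equation*}
 \text{IP}^{\overline{t}}(M) \,=\, \sum_j b_j(M_{12}) t^j + \sum_j b_j(M_{21}) t^j \,=\, P_t(M_{12}) + P_t(M_{21}),
\end{equation*}
which is the assertion; the two summands are precisely the Poincar\'e polynomials computed in Theorem \ref{P_t(M_12)} and its $M_{21}$ analogue.

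Since every step is an immediate consequence of statements already established, there is no genuine obstacle remaining: the analytic and geometric difficulty was entirely absorbed into the derivation of \eqref{IH(M)_top_perversity}, and the only care needed here is confirming the degreewise nature of that isomorphism noted above, after which the passage to Poincar\'e polynomials is automatic.
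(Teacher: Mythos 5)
Your proposal is correct and follows essentially the same route as the paper: it assembles the same chain --- the splitting \eqref{tilde(M)} of the normalization forced by transversality, Theorem \ref{(G-M)_Theorem_4.3} for normal spaces, and the normalization invariance of Theorem \ref{(G_M)_Theorem_4.2} via Remark \ref{normalization_related_remark} --- to obtain the isomorphism \eqref{IH(M)_top_perversity}, and then reads off the polynomial identity by taking dimensions degree by degree. Your explicit remark that each isomorphism is grading-preserving is a slightly more careful articulation of a point the paper leaves implicit, but it is the same proof.
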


\begin{remark}\label{intersection_euler_characteristic}
  We notice that, since $\chi(M_{12})$ and $\chi(M_{21})$ are zero, the intersection Euler characteristic of $M$ also turns out to be zero by Theorem \ref{IPP(M)}.
\end{remark}

\end{document}